\documentclass[11pt,a4paper]{article}
\usepackage{fullpage}

\usepackage[utf8]{inputenc}
\usepackage{amsthm,amssymb}
\usepackage{amsmath}

\usepackage{authblk}
\usepackage{comment}
\usepackage{algorithm}
\usepackage{algpseudocode}
\usepackage{tcolorbox}
\usepackage{mathtools,thmtools}
\usepackage[mathcal,mathscr]{eucal}
\usepackage{epsfig,graphicx,graphics,color}
\usepackage{caption}
\usepackage{subcaption}
\usepackage{enumerate}
\usepackage{enumitem, crossreftools}
\usepackage[sort]{cite}
\usepackage{titling}
\usepackage{algorithm}
\usepackage{tikz}
\usepackage{hyperref}
\usepackage{mathtools}
\usepackage{makecell}
\usepackage{hhline}
\usepackage{color, colortbl}
\definecolor{Gray}{gray}{0.9}

\hypersetup{
    colorlinks=true,       
    linkcolor=blue,        
    citecolor=red,         
    filecolor=magenta,     
    urlcolor=cyan,         
    linktocpage=true
}

\newcounter{algsubstate}
\makeatletter

\makeatother


\theoremstyle{plain}
\newtheorem{theorem}{Theorem}
\newtheorem{lemma}[theorem]{Lemma}

\newtheorem{claim}[theorem]{Claim}

\newtheorem{qu}{Question}

\theoremstyle{definition}

\newtheorem{ex}[theorem]{Example}

\newtheorem{remark}[theorem]{Remark}


\newcommand{\cB}{\mathcal{B}}
\newcommand{\cC}{\mathcal{C}}
\newcommand{\cD}{\mathcal{D}}

\newcommand{\cF}{\mathcal{F}}
\newcommand{\cG}{\mathcal{G}}
\newcommand{\cI}{\mathcal{I}}

\newcommand{\cH}{\mathcal{H}}
\newcommand{\cK}{\mathcal{K}}
\newcommand{\cM}{\mathcal{M}}

\newcommand{\cT}{\mathcal{T}}

\newcommand{\bM}{\mathbb{M}}

\newcommand{\FC}{\mathbb{T}}
\newcommand{\IS}{\mathbb{I}}

\def\bx{\mathbf{x}}

\DeclareMathOperator*{\cl}{cl}

\def\final{0}  
\ifnum\final=0  
\newcommand{\krnote}[1]{{\color{red}[{\tiny Krist\'of: \bf #1}]\marginpar{\color{red}*}}}
\newcommand{\enote}[1]{{\color{blue}[{\tiny Endre: \bf #1}]\marginpar{\color{Blue}*}}}
\newcommand{\knote}[1]{{\color{orange}[{\tiny Kaz: \bf #1}]\marginpar{\color{red}*}}}
\else 
\newcommand{\krnote}[1]{}
\newcommand{\enote}[1]{}
\newcommand{\knote}[1]{}
\fi


\title{Matroid Horn Functions}

\author{
	Krist{\'o}f B{\'e}rczi\thanks{MTA-ELTE Momentum Matroid Optimization Research Group and MTA-ELTE Egerv\'ary Research Group, Department of Operations Research, E{\"o}tv{\"o}s Lor{\'a}nd University, Budapest, Hungary. 
	Email: \texttt{kristof.berczi@ttk.elte.hu}.} 
\and
	Endre Boros\thanks{MSIS Department and RUTCOR, Rutgers University, Piscataway, New Jersey, USA. Email: \texttt{endre.boros@rutgers.edu}.}
\and
	Kazuhisa Makino\thanks{Research Institute for Mathematical Sciences (RIMS) Kyoto University, Kyoto, Japan. Email: \texttt{makino@kurims.kyoto.ac.jp}.}
}

\begin{document}
\date{}
\maketitle

\begin{abstract}
Hypergraph Horn functions were introduced as a subclass of Horn functions that can be represented by a collection of circular implication rules. These functions possess distinguished structural and computational properties. In particular, their characterizations in terms of implicate-duality and the closure operator provide extensions of matroid duality and the Mac Lane\,--\,Steinitz exchange property of matroid closure, respectively. 

In the present paper, we introduce a subclass of hypergraph Horn functions that we call \emph{matroid Horn} functions. We provide multiple characterizations of matroid Horn functions in terms of their canonical and complete CNF representations. We also study the Boolean minimization problem for this class, where the goal is to find a minimum size representation of a matroid Horn function given by a CNF representation. While there are various ways to measure the size of a CNF, we focus on the \emph{number of circuits} and \emph{circuit clauses}. We determine the size of an optimal representation for binary matroids, and give lower and upper bounds in the uniform case. For uniform matroids, we show a strong connection between our problem and Tur\'an systems that might be of independent combinatorial interest.

  \bigskip

  \noindent \textbf{Keywords:} Boolean minimization, Horn functions, Matroids, Tur\'an systems
\end{abstract}

\section{Introduction}
\label{sec:introduction}

Hypergraph Horn functions were introduced in \cite{hypergraph_horn} as a special subclass of Horn functions that is rich in mathematical structure and has advantageous algorithmic properties. The family of hypergraph Horn functions turned out to be highly structured yet general enough to provide insight into the structure of Horn functions, e.g., they generalize equivalence relations, and more generally matroids. The authors introduced the notion of implicate-duality or $i$-duality, and showed that every Boolean function has a unique $i$-dual, which is always hypergraph Horn. In particular, $i$-duality generalizes matroid duality in the sense that if a hypergraph Horn function corresponds to a matroid, then its $i$-dual corresponds to the dual matroid.

In this paper, we consider hypergraph Horn functions for which the underlying hypergraph corresponds to the circuits of a matroid, and call them \emph{matroid Horn} functions. These functions possess many interesting properties. Some are functional properties that are related only to the structure of true and false sets of the function, while others are related to the structure of possible representations of the function. We provide characterizations of matroid Horn functions in terms of prime implicates, the closure operator and the rank function, and the $i$-dual. Meanwhile, we also explain the correspondence between the basic notions of matroid theory and matroid Horn logic. 

One of the most intriguing questions related to Boolean functions is the so called \emph{Boolean minimization problem} (BM): given a representation of a Boolean function in a conjunctive normal form (CNF), find an equivalent CNF representation of minimum size with respect to a certain objective. There are different ways to measure the size of a CNF, standard examples being the number of clauses and the total number of literals. Finding a short representation is motivated by many applications, for example, such a representation can be used to reduce the size of the knowledge base in a propositional expert system, which in turn improves the performance of the system. An unsatisfiable formula can be easily recognized from its shortest CNF representation for both above measures of the output size. Therefore BM contains the CNF satisfiability problem (SAT) as a special case, implying that it is NP-hard. However, while SAT is NP-complete (i.e. $\Sigma^p_1$-complete~\cite{C71}), BM is $\Sigma^p_2$-complete~\cite{U01,UVS06}, suggesting that BM is probably more difficult than SAT. We consider the problem of finding short representations of matroid Horn functions. We also discuss minimum representations of matroids in terms of circuit generation. We show that for binary matroids, the set of chordless cycles provides the unique optimal representation for all objectives considered. For uniform matroids, we prove that our problem is closely related to the minimum size of Tur\'an systems and provide bounds as a function of Tur\'an numbers; this result might be of independent combinatorial interest. 
\medskip

The rest of the paper is organized as follows. Basic definitions and results on hypergraphs, Horn functions and matroids are presented in Section~\ref{sec:preliminaries} as in~\cite{hypergraph_horn}. In Section~\ref{sec:matroid}, we introduce the class of matroid Horn functions and provide several equivalent characterizations. The Boolean minimization problem for matroid Horn functions is discussed in Section~\ref{sec:representation}. Finally, we close the paper by some open problems in Section~\ref{sec:conclusions}.

\section{Preliminaries}
\label{sec:preliminaries}

For discussing the result of the paper, we need basic results on hypergraphs, Horn functions and matroids. To make the paper self-contained, we repeat the definitions as summarized in~\cite{hypergraph_horn}.  

\paragraph{Basic notation.}

For a positive integer $k$, we use $[k]=\{1,\dots,k\}$. Given a ground set $E$ together with subsets $X,Y\subseteq E$, the \emph{difference} of $X$ and $Y$ is denoted by $X\setminus Y$. If $Y$ consists of a single element $y$, then $X\setminus \{y\}$ and $X\cup\{y\}$ are abbreviated as $X-y$ and $X+y$, respectively. The symmetric difference of $X$ and $Y$ is denoted by $X\triangle Y= (X\setminus Y)\cup(Y\setminus X)$.

\paragraph{Hypergraphs.}

For a finite set $V$, a family $\cH\subseteq 2^V$ of its subsets is called  a {\em hypergraph}, where 
$H \in \cH$ is  called a {\em hyperedge} of $\cH$. A hypergraph $\cH$ is called {\em Sperner} if no distinct hyperedges of $\cH$ contain one another. For a subset $X\subseteq V$ we call $V\setminus X$ its \emph{complement}, and we denote by $\cH^c=\{V\setminus H\mid H\in\cH\}$ the \emph{complementary family} of $\cH$. Note that the operator ``$c$" is an  involution, i.e., $\left(\cH^c\right)^c=\cH$ holds for all hypergraphs. A subset $T\subseteq V$ is called a \emph{transversal} of $\cH$ if $T\cap H\neq\emptyset$ for all hyperedges $H\in\cH$. We denote by $\cH^d$ the \emph{family of minimal transversals} of $\cH$. For simplicity, we omit the parentheses from the notation when applying these operations repeatedly, for example, we write $\cH^{dc}=(\cH^d)^c$ and $\cH^{cd}=(\cH^c)^d$. For a Sperner hypergraph $\cH$, the operator ``$d$'' is also an involution, hence we have $\cH^{dccd}=\cH^{cddc}=\cH$ for such hypergraphs. Note that the family $\cH^{dc}$ is the \emph{family of maximal independent sets} of $\cH$, where a set $I \subseteq V$ is called {\em independent}  of $\cH$ if it contains no hyperedge of $\cH$. The family $\cH^{cd}$ consists of all minimal subsets of $V$ that are not contained in a  hyperedge of $\cH$. Every Sperner hypergraph $\cH$ defines an \emph{independence system} $\cI$ consisting of all independent sets of $\cH$~\cite{Welsh1976}. In this correspondence,  maximal independent sets of $\cH$ are also called \emph{bases} of the associated independence system $\cI$, while minimal {\em dependent} sets (i.e.,  hyperedges) of $\cH$ are called  \emph{circuits} of $\cI$. Typically we reserve the notation $\cB$ and $\cC$ for the families of bases and circuits, respectively. Note that for all independence systems we have 
\begin{equation}\label{e-bases-circuits}
	\cB^{cd}=\cC ~~~\text{ and equivalently }~~~ \cC^{dc}=\cB.
\end{equation}
Independent sets are exactly the subsets of hyperedges of $\cB=\cC^{dc}$, or equivalently, subsets that do not contain a hyperedge of $\cC$. 

For an arbitrary hypergraph $\cH\subseteq 2^V$ we denote by $\cH^\cap$ the \emph{intersection closure} of $\cH$, defined by 
\[
\cH^\cap  =  \left\{ \left.\bigcap_{F\in\cF} F ~\right|~ \cF\subseteq \cH \right\}.
\]
We note that the intersection of an empty family is defined as $V$, and thus we have $V\in \cH^\cap$ for all hypergraphs $\cH\subseteq 2^V$. Analogously, we denote by $\cH^\cup$ the \emph{union closure} of $\cH$ defined as
\[
\cH^\cup  =  \left\{ \left.\bigcup_{F\in\cF} F ~\right|~ \cF\subseteq \cH \right\}.
\]
We note that the union of an empty family is defined as the empty set, and thus we have $\emptyset\in\cH^\cup$ for all hypergraphs $\cH$. 

\paragraph{Hypergraph Horn functions.}

We denote by $V$ the set of $n$ Boolean variables $x$ and call these together with their negations $\overline{x}$ as \emph{literals}. Members of $V$ are called \emph{positive literals} while their negations are \emph{negative literals}. A disjunction of a subset of the literals is called a \emph{clause} if it contains no complementary pair of literals $x$ and $\overline{x}$, and the conjunction of clauses is a called a \emph{conjunctive normal form} (or in short a {\em CNF}). It is well-known (see e.g.,~\cite{CH11}) that every Boolean function $f:\{0,1\}^V\rightarrow\{0,1\}$ can be represented by a CNF, typically not in a unique way. We use Greek letters to denote CNFs, and Latin letters to denote Boolean functions.

Truth assignments (i.e., Boolean vectors) $\bx=(x_1,\dots,x_n)\in\{0,1\}^V$ can be viewed equivalently as characteristic vectors of subsets. For a subset $Z\subseteq V$ we denote by $\chi_Z\in \{0,1\}^V$ its characteristic vector, i.e., $(\chi_Z)_i=1$ if and only if $i \in Z$. Since we use primarily a combinatorial notation in this paper, we use for a function $f$ and Boolean expression $\Phi$ the notation $f(Z)$ and $\Phi(Z)$ instead of $f(\chi_Z)$ and $\Phi(\chi_Z)$, to denote the evaluation of $f$ and  $\Phi$ at the binary vector $\chi_Z$, respectively. We say that a set $Z\subseteq V$ is a \emph{true set} of $f$ if $f(Z)=1$, and a \emph{false set} otherwise. We denote by $\cT(f)$ and $\cF(f)$ the \emph{families of true sets} and \emph{false sets} of $f$, respectively. Every Boolean expression defines/represents a unique Boolean function. If $A$ and $B$ denote Boolean functions or Boolean expressions, we write $A=B$ if $A(X)=B(X)$ for all $X\subseteq V$, and  $A\leq B$ if $A(X)\leq B(X)$ for all $X\subseteq V$, where $B$ is called a \emph{majorant} of $A$ in the latter case. We also write $A <B$ if $A\leq B$ and $A\not=B$. 

A clause is called \emph{definite Horn} if it contains exactly one positive literal. It is easy to see that definite Horn clauses represent simple implications. Namely, for a proper subset $B\subsetneq V$ and $v\in V\setminus B$  the implication $B\to v$ is equivalent to the definite Horn clause $C=v\vee \left(\bigvee_{u\in B}\overline{u}\right)$: the true sets of both expressions are exactly the sets $T\subseteq V$ such that either $T\not\supseteq B$ or $T\supseteq  B\cup\{v\}$. We call $B$ the \emph{body} of the Horn clause. A CNF is called  {\em definite Horn} if it consists of definite Horn clauses, and a Boolean function is called  {\em definite Horn} if it can be represented by a definite Horn CNF. The following characterization of definite Horn functions is  well-known. 

\begin{lemma}[see e.g., \cite{McKinsey1943,CH11}]
\label{l-E-intersection-closed}
	A Boolean function $f$ is definite Horn if and only if the family $\cT(f)$  of true sets of $f$ is closed under intersection and contains $V$.
	\hfill$\Box$
\end{lemma}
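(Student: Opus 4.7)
The plan is to prove both directions directly. For the forward implication, I would note that a single definite Horn clause $B\to v$ has as its family of true sets exactly those $T\subseteq V$ with either $T\not\supseteq B$ or $T\supseteq B+v$. This family trivially contains $V$, and is intersection-closed: if $T_1,T_2$ both satisfy the implication and $T_1\cap T_2\supseteq B$, then $T_i\supseteq B$ and hence $v\in T_i$ for $i=1,2$, so $v\in T_1\cap T_2$. The true-set family $\cT(f)$ of a definite Horn CNF $\Phi$ is the intersection over its clauses of such families, so it inherits both properties.

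For the converse, assume $\cT(f)$ is intersection-closed and contains $V$. Then for every $X\subseteq V$ the subfamily $\{T\in\cT(f)\mid T\supseteq X\}$ is nonempty (it contains $V$) and closed under intersection, so
\[
\cl(X)\ :=\ \bigcap\{T\in\cT(f)\mid T\supseteq X\}
\]
is the unique inclusion-minimal true set containing $X$, and $X\in\cT(f)$ iff $\cl(X)=X$. I would then write down the definite Horn CNF
\[
\Phi\ :=\ \bigwedge_{X\subseteq V}\ \bigwedge_{v\in \cl(X)\setminus X}(X\to v)
\]
and verify that it represents $f$. If $T\in\cT(f)$ and $T\supseteq X$, then $\cl(X)\subseteq T$ by minimality, so every $v\in\cl(X)\setminus X$ lies in $T$, i.e.\ $T$ satisfies every clause of $\Phi$. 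Conversely, if $T\in\cF(f)$ then $\cl(T)\supsetneq T$, so $\Phi$ contains a clause $T\to v$ with $v\in\cl(T)\setminus T$ which $T$ violates.

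I expect no genuine obstacle: the lemma is a classical restatement of the closure property of definite Horn CNFs, cited from \cite{McKinsey1943,CH11}. The only subtlety worth flagging is the handling of the degenerate cases. The hypothesis $V\in\cT(f)$ rules out the identically-false function (for which $\cT(f)$ would be empty and $\cl$ undefined), whereas the identically-true function corresponds to $\cl(X)=X$ for all $X$, yielding the empty conjunction $\Phi$, which is by convention true. Everything else is a direct verification from the two hypotheses.
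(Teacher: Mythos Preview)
The paper does not give its own proof of this lemma: it is stated with a citation to \cite{McKinsey1943,CH11} and closed with a $\Box$. So there is nothing to compare against, and the question is only whether your argument is correct.

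It is. The forward direction is the routine check that the true-set family of a single definite Horn clause is intersection-closed and contains $V$, and that these properties are preserved under taking conjunctions (intersections of true-set families). For the converse you use the hypothesis $V\in\cT(f)$ to make the closure $\cl(X)$ well-defined, build the canonical CNF $\Phi=\bigwedge_X\bigwedge_{v\in\cl(X)\setminus X}(X\to v)$, and verify $\cT(\Phi)=\cT(f)$ in both directions. Your handling of the degenerate cases is accurate: the identically-true function yields the empty conjunction, and the identically-false function is excluded by $V\in\cT(f)$. One small point you leave implicit but which is immediate: when $T\in\cT(f)$ and $T\not\supseteq X$, the clause $X\to v$ is satisfied vacuously, so $T$ indeed satisfies every clause of $\Phi$, not just those whose body it contains.
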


Lemma~\ref{l-E-intersection-closed} implies that for any  set $Z\subseteq V$ there exists a unique minimal true set containing $Z$, the so-called \emph{closure} of $Z$ denoted by $\FC_h(Z)$. In fact such a closure can be computed efficiently from  any definite Horn CNF representation $\Phi$ of $h$ by the so-called \emph{forward chaining procedure} (see e.g.,~\cite{CH11}): Let $A\subseteq V$ denote the set of all variables $v\in V \setminus Z$ for which there exists a clause $B\to v$ in $\Phi$ with $B\subseteq Z$ and $v \in V \setminus Z$, and define $\FC^1_{\Phi}(Z)= Z\cup A$. For $i\geq 2$ we define $\FC^i_{\Phi}(Z)=\FC^1_{\Phi}(\FC^{i-1}_{\Phi}(Z))$. Since $ Z\subseteq \FC^{1}_{\Phi}(Z) \subseteq  \FC^{2}_{\Phi}(Z) \subseteq \dots \subseteq V$, $\FC^{i+1}_{\Phi}(Z)=\FC^i_{\Phi}(Z)$ holds for some integer $i\leq n$. Let $i^*$ be the smallest such index $i$. Then we have $\FC^t_{\Phi}(Z)=\FC^{i^*}_{\Phi}(Z)$ for all $t\geq i^*$, and $\FC^{i^*}_{\Phi}(Z)$ is the minimal true set of $\Phi$ that contains $Z$. Thus we can define $\FC_{\Phi}(Z)=\FC^{i^*}_{\Phi}(Z)$, and we say that  {\em $Z$ can be closed by $\Phi$ in $i^*$ steps}. While we may have $\FC^1_{\Phi}(Z)\neq \FC^1_{\Psi}(Z)$ for different definite Horn CNFs $\Phi$ and $\Psi$ representing the same function $h$, it can be shown (see e.g.,~\cite{CH11}) that the resulting set $\FC_\Phi(Z)$ does not depend on the particular choice of the representation $\Phi$ of $h$, but only on the underlying function $h$, that is, we can write $\FC_h(Z)=\FC_\Phi(Z)$ for this uniquely defined closure of $Z$.  Note that $\FC$ is in fact a closure operator in finite set theory,  and hence we call a subset $Z\subseteq V$ \emph{closed} (with respect to $h$) if $\FC_h(Z)=Z$. It is not difficult to check that a set is closed with respect to $h$ if and only if it is a true set of $h$. 

For a definite Horn function $h$, a clause $B\to v$ is called an \emph{implicate} of $h$ if it is a majorant of $h$, that is,  if $(B\to v) \geq h$. An implicate $B\to v$ of $h$ is \emph{prime} if $h$ has no other implicate $B'\rightarrow v$ with $B'\subsetneq B$. The following lemma characterizes implicates of definite Horn functions in terms of the closure operator.  

\begin{lemma}[see e.g.,~\cite{CH11}]\label{l-E-implicate}
A clause $B\to v$ is an implicate of a definite Horn function $h$ if and only if $v\in \FC_h(B)\setminus B$. 
\hfill$\Box$
\end{lemma}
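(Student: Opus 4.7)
My plan is to prove the two directions separately, with the key observation in both being that $\FC_h(B)$ is contained in every true set of $h$ that contains $B$. This containment is not part of the definition of $\FC_h(B)$ as stated (which only says it is the minimal true set containing $B$), so I would derive it first as a short preliminary: given any true set $T \supseteq B$, Lemma~\ref{l-E-intersection-closed} implies that $T \cap \FC_h(B)$ is also a true set, and it contains $B$, so by minimality $\FC_h(B) \subseteq T \cap \FC_h(B) \subseteq T$.

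For the ``if'' direction, I would assume $v \in \FC_h(B) \setminus B$ and check the clause $B \to v$ against an arbitrary true set $T$ of $h$. If $B \not\subseteq T$, then the clause is satisfied vacuously; otherwise $B \subseteq T$, and by the preliminary observation $\FC_h(B) \subseteq T$, so $v \in T$, and the clause holds at $T$. Thus $(B\to v) \geq h$. I would also note that $v\notin B$ ensures the clause is well-formed (no complementary pair of literals).

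For the ``only if'' direction, I would assume $B \to v$ is an implicate of $h$. The key move is to evaluate the clause at the specific true set $T = \FC_h(B)$: since $B \subseteq \FC_h(B)$ and $(B\to v)(\FC_h(B)) \geq h(\FC_h(B)) = 1$, the clause is satisfied at $\FC_h(B)$ in a non-vacuous way, forcing $v \in \FC_h(B)$. The fact that $v \notin B$ follows because clauses contain no complementary literal pairs, so $v \in \FC_h(B)\setminus B$.

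There is no substantive obstacle here; the only thing worth being careful about is the preliminary fact that $\FC_h(B)$ sits inside every true set containing $B$, which relies on intersection-closedness of $\cT(h)$ rather than on any combinatorial feature of the forward chaining procedure. Once that is in place, both implications are one-line checks.
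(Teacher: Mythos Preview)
Your argument is correct. The paper does not actually prove this lemma; it is stated with a citation to \cite{CH11} and closed immediately with a $\Box$, so there is no proof in the paper to compare against. What you have written is the standard argument for this fact and would serve perfectly well as a self-contained justification.
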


Given a definite Horn CNF $\Phi$, a subset $A\subseteq V$ and a variable $v\in V\setminus A$, we write $A\overset{\Phi}{\to}v$ to indicate that $A\to v$ is an implicate of $\Phi$, or equivalently that $v\in \FC_{\Phi}(A)$. To indicate the opposite, that is that $A\to v$ is not an implicate of $\Phi$, or equivalently, that $v\not\in \FC_{\Phi}(A)$ we may simply write $A\overset{\Phi}{\nrightarrow}v$.

A subset $K\subseteq V$ is called a \emph{key} of the definite Horn function $h$ if $\FC_h(K)=V$. We denote by 
\begin{equation*}
\cK(h)=\{K\subseteq V\mid \FC_h(K)=V\ \text{and}\ \FC_h(K')\neq V\ \text{for all $K'\subsetneq K$}\},
\end{equation*}
the family of \emph{minimal keys} of $h$, and $\cK(h)$ is called  the {\em key set} of $h$. A true set $T\in\cT(h)$ is called \emph{nontrivial} if $T\neq V$. We denote by 
\begin{equation*}
	\cM(h)=\{T\subsetneq V\mid h(T)=1\ \text{and}\ h(T')=0\ \text{for all $T\subsetneq T'\subsetneq V$}\},
\end{equation*}
the family of \emph{maximal nontrivial true sets} of $h$. Note that $\cM(h)$ is a subfamily of the so-called \emph{characteristic models} of $h$~\cite{KhardonRoth1996}.

Both families, $\cK(h)$ and $\cM(h)$ are Sperner hypergraphs for all definite Horn functions $h$. It is not difficult to verify that maximal nontrivial true sets form the family of maximal independent sets of the family of minimal keys, and minimal keys are exactly the minimal sets that are not contained in a maximal nontrivial true set.

\begin{lemma}[see e.g.,~\cite{CH11}]\label{l-E-M(h)-K(h)}
For a definite Horn function $h$ we have $\cM(h)=\cK(h)^{dc}$ and $\cK(h)=\cM(h)^{cd}$.
\hfill$\Box$
\end{lemma}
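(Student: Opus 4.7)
The plan is to reduce both equalities to the single identity $\cM(h)=\cK(h)^{dc}$, since both $\cM(h)$ and $\cK(h)$ are Sperner hypergraphs (collections of maximal/minimal sets of some family) and therefore the operator ``$d$'' is an involution on them, giving $\cM(h)^{cd}=\cK(h)^{dccd}=\cK(h)$ for free. So the whole task reduces to showing that the family of maximal nontrivial true sets of $h$ coincides with the family of maximal independent sets of the hypergraph $\cK(h)$.

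The key observation I would establish first is the following dichotomy for an arbitrary subset $X\subseteq V$:
\[
X \text{ is a key of } h \iff X \text{ contains some minimal key} \iff X \text{ is not contained in any nontrivial true set of } h.
\]
The first equivalence uses monotonicity of $\FC_h$: if $K\subseteq X$ with $\FC_h(K)=V$, then $V=\FC_h(K)\subseteq\FC_h(X)$, so $X$ is a key; conversely, any key contains a minimal one by finiteness. For the second equivalence, note that if $X$ is not a key, then $\FC_h(X)\subsetneq V$ is itself a nontrivial true set (Lemma~\ref{l-E-intersection-closed} guarantees it is closed) containing $X$; conversely, if $X\subseteq T$ for some true set $T\subsetneq V$, then $\FC_h(X)\subseteq \FC_h(T)=T\neq V$, so $X$ is not a key.

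From this dichotomy, the independent sets of $\cK(h)$ (i.e., sets containing no minimal key) are exactly the subsets of nontrivial true sets of $h$. Taking maximal elements of both sides, the maximal independent sets of $\cK(h)$ are precisely the maximal nontrivial true sets of $h$, which is the definition of $\cM(h)$. By the description of $\cH^{dc}$ as the family of maximal independent sets of $\cH$ recalled in the preliminaries, this gives $\cM(h)=\cK(h)^{dc}$.

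For the second identity, since $\cK(h)$ is Sperner, applying the involution identity $\cH^{dccd}=\cH$ to $\cH=\cK(h)$ yields
\[
\cK(h)=\cK(h)^{dccd}=(\cK(h)^{dc})^{cd}=\cM(h)^{cd},
\]
as required. The only step that requires genuine argument rather than unpacking definitions is the middle equivalence above, namely the passage between ``$X$ is not a key'' and ``$X$ is a subset of a nontrivial true set'', but this follows immediately from the fact (already established in the preliminaries) that $\FC_h(X)$ is the smallest true set containing $X$.
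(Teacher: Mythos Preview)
Your proof is correct and follows exactly the reasoning the paper sketches in the sentence immediately preceding the lemma (``maximal nontrivial true sets form the family of maximal independent sets of the family of minimal keys, and minimal keys are exactly the minimal sets that are not contained in a maximal nontrivial true set''); the paper itself gives no further proof, only the citation and the $\Box$. Your argument simply fills in the details of that sentence using the closure operator and the Sperner involution identities already recorded in the preliminaries, so there is nothing to compare.
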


Given a Boolean function $f:2^V\to \{0,1\}$, we call an implicate $A\to v$ of it \emph{circular} if $((A+v)-u)\to u$ is also an implicate of $f$ for every $u\in A$. We say that a subset $I\subseteq V$ is an \emph{implicate set} of $f$ if $(I-v) \to v$ is an implicate of $f$ for all $v\in I$, and denote the \emph{family of implicate sets} by $\cI(f)$. By definition, we have $\emptyset\in\cI(f)$ for all Boolean functions $f$. Furthermore, let us observe that the hypergraph $\cI(f)$ is union closed, that is, $I,J\in\cI(f)$ implies $I\cup J\in\cI(f)$.

To a hypergraph $\cH\subseteq 2^V$ we associate the definite Horn CNF $\Phi_\cH$ defined as
\begin{equation*}\label{e-circulant}
	\Phi_\cH  =  \bigwedge_{H\in\cH} \left(\bigwedge_{v\in H} \left((H-v)\to v\right)\right)
\end{equation*}
and call $\Phi_{\cH}$ the \emph{circular CNF} associated to the hypergraph $\cH$. We say that a definite Horn function $h:2^V\to \{0,1\}$ is \emph{hypergraph Horn} if it has a circular CNF representation, that is, if there exists a hypergraph $\cH\subseteq 2^V$ such that $h  =  \Phi_\cH$. Hypergraph Horn functions were introduced in \cite{hypergraph_horn} as highly structured objects with distinguished algorithmic properties. In particular, the following results appeared in \cite[Corollary 10, Theorem 11]{hypergraph_horn}. 

\begin{theorem}\label{thm:summary}
\mbox{}
\begin{enumerate}[label={\rm (\alph*)}]\itemsep0em
\item \label{it:e} A definite Horn function $h$ is hypergraph Horn if and only if for every false set  $F$ of $h$, there exists an implicate set $I\in\cG(h)$ such that $|I\setminus F|=1$.
\item \label{e2-true-set-of-h} For a hypergraph Horn function $h$, we have $\cT(h) = \{ T\subseteq V \mid    \nexists ~ I\in\cI(h) \text{ with } ~ |I\setminus T|=1  \}$.
\end{enumerate}
\end{theorem}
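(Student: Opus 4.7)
My plan is to prove the two parts in sequence, with part (b) following as an easy consequence of part (a).

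For the forward direction of (a), assume $h$ is hypergraph Horn, so $h = \Phi_\cH$ for some hypergraph $\cH \subseteq 2^V$. Given a false set $F$ of $h$, some clause of $\Phi_\cH$ must be violated by $F$. By construction of $\Phi_\cH$, that clause has the form $(H-v) \to v$ for some $H \in \cH$ and $v \in H$, so $H - v \subseteq F$ and $v \notin F$. The hyperedge $H$ itself is an implicate set of $h$: for every $u \in H$ the clause $(H-u)\to u$ is literally a conjunct of $\Phi_\cH = h$, hence an implicate of $h$, so $H \in \cI(h)$. Moreover $H \setminus F = \{v\}$, giving the required implicate set.

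For the backward direction of (a), set $\cH = \cI(h)$ and show $h = \Phi_{\cI(h)}$. Every clause of $\Phi_{\cI(h)}$ is by definition an implicate of $h$, so $h \leq \Phi_{\cI(h)}$, i.e., $\cT(h) \subseteq \cT(\Phi_{\cI(h)})$. For the reverse, suppose for contradiction that some $T$ is true for $\Phi_{\cI(h)}$ but false for $h$. By hypothesis there exists $I \in \cI(h)$ with $|I \setminus T| = 1$, say $I \setminus T = \{v\}$. But then $I-v \subseteq T$ and $v \notin T$, so the clause $(I-v) \to v$ of $\Phi_{\cI(h)}$ is violated by $T$, a contradiction. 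Hence $h = \Phi_{\cI(h)}$ is hypergraph Horn.

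For part (b), the easy inclusion is general: if $T \in \cT(h)$ and there were $I \in \cI(h)$ with $I \setminus T = \{v\}$, then $(I-v) \to v$ being an implicate of $h$ (from the definition of $\cI(h)$) together with Lemma~\ref{l-E-implicate} would give $v \in \FC_h(I-v) \subseteq \FC_h(T) = T$, contradicting $v \notin T$. Conversely, if $T \notin \cT(h)$, i.e., $T$ is false, then part (a) applied to the hypergraph Horn function $h$ immediately provides some $I \in \cI(h)$ with $|I \setminus T| = 1$. Combining the two inclusions yields the claimed characterization of $\cT(h)$.

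The argument is largely bookkeeping, so I do not expect a genuine obstacle; the single substantive move is the identification $\cH = \cI(h)$ in the backward direction of (a), which works precisely because $\cI(h)$ is closed under the circular structure needed by $\Phi_\cH$—each $I \in \cI(h)$ contributes not just one clause but the full symmetric bundle $\{(I-v)\to v : v \in I\}$, and this is exactly what lets the hypothesis on the single-element-outside witness translate into a violated clause of $\Phi_{\cI(h)}$.
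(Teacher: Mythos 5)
Your proof is correct, but note that the paper itself offers no proof of this statement to compare against: Theorem~\ref{thm:summary} is imported verbatim from \cite{hypergraph_horn} (cited there as Corollary~10 and Theorem~11), so you have in effect supplied the missing argument. Your route is the natural one and it works: in the forward direction of (a) you use that a violated clause of $\Phi_\cH$ pins down a hyperedge $H$ with $|H\setminus F|=1$, and that every $H\in\cH$ is automatically an implicate set because the circular CNF contains the whole bundle $\{(H-u)\to u : u\in H\}$; in the backward direction you show $h=\Phi_{\cI(h)}$, i.e.\ that $h$ coincides with its minimal hypergraph Horn majorant $h^\circ$ as defined in the preliminaries, which is exactly the right invariant. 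Part (b) then follows from (a) plus Lemma~\ref{l-E-implicate} and the monotonicity of $\FC_h$ together with the fact that true sets are closed, as you say. Two cosmetic points: the symbol $\cG(h)$ in the statement is evidently a typo for $\cI(h)$, which you silently and correctly repair; and in the forward direction of (a) you could note explicitly that the witness $H$ you produce lies in $\cI(h)$ regardless of which circular representation $\cH$ of $h$ was chosen, since $\cI(h)$ depends only on the function $h$ --- but this is exactly what your argument establishes, so there is no gap.
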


The conjunction of two hypergraph Horn functions $h_1$ and $h_2$ is also hypergraph Horn, since if $\cH_1$ and $\cH_2$ are hypergraph such that $h_i=\Phi_{\cH_i}$ for $i=1,2$, then $h_1\wedge h_2=\Phi_{\cH_1\cup\cH_2}$. Therefore any Boolean function $f$ admits a unique minimal hypergraph Horn majorant which we denote by $f^{\circ}=\Phi_{\cI(f)}$. The \emph{implicate-dual} $f^i$ of a Boolean function $f$ is the unique function satisfying the equality
\begin{equation}\label{e-f->f^i}
\cT(f^i)  =  \cI(f)^c.
\end{equation}
Since $\cI(f)=\cI(f^\circ)$ for every Boolean function $f$, we have $f^i=(f^\circ)^i$. It can be further showed that $f^i$ is hypergraph Horn for every Boolean function $f$.

\paragraph{Matroids.} 

We give a brief introduction into matroid theory, and refer the reader to~\cite{oxley2011matroid} for further details. A \emph{matroid} $\bM=(E,\cC)$ is defined by its \emph{ground set} $E$ and its \emph{family of circuits} $\cC\subseteq 2^E$ that satisfies the following \emph{circuit axioms}. 
\begin{enumerate}[label={(C\arabic*)}]\itemsep0em
\item \label{cax1} $\emptyset\notin\cC$.
\item \label{cax2} If $C_1,C_2\in\cC$, then $C_1\not\subset C_2$.
\item \label{cax3} If $C_1,C_2\in\cC$ are distinct and $u\in C_1\cap C_2$, then there exists $C_3\in\cC$ such that $C_3\subseteq (C_1\cup C_2) - u$.
\end{enumerate}

For a family $\cC \subseteq 2^V$ that satisfies satisfies the circuit axioms, i.e. it is the family of circuits of some matroid $\bM$, let $\cI$ denote the family of independent sets of $\cC$. Then it is known that it satisfies so-called \emph{independence axioms}: (I1) $\emptyset\in\cI$, (I2) $X\subseteq Y\in \cI\Rightarrow X\in\cI$, and (I3) $X,Y\in\cI,\ |X|<|Y|\Rightarrow\exists e\in Y-X\ \text{s.t.}\ X+e\in\cI$. The \emph{rank} of a set $X\subseteq V$ is the maximum size of an independent subset of $X$ and is denoted by ${\it rank}_\bM(X)$. The maximal independent sets of $M$ are called \emph{bases}.  A set $X\subseteq V$ is called \emph{closed} (or a \emph{flat} or a \emph{subspace}) if ${\it rank}_{\bM}(X+v)>{\it rank}_{\bM}(X)$ for every $v\in V\setminus X$. The \emph{closure} (or \emph{span}) of a set $X\subseteq V$ is defined as $\cl_{\bM}(X)=\{v\in V\mid {\it rank}_{\bM}(X+v)={\it rank}_{\bM}(X)\}$. It is known that $X$ is closed if and only if $\cl_{\bM}(X)=X$. A \emph{hyperplane} is a closed set of rank ${\it rank}_{\bM}(V)-1$. 

Like circuit and independence axioms, each concept has axioms for matroids. For example, the closure operator of a matroid $\bM$ satisfies the following \emph{closure axioms}.
\begin{enumerate}[label=(CL\arabic*)]\itemsep0em
\item \label{clax1} $X\subseteq \cl_{\bM} (X)$ for all $X \subseteq V$.
\item \label{clax2} $\cl_{\bM}(X)= \cl_{\bM}(\cl_{\bM} (X))$ for all $X \subseteq V$.
\item \label{clax3} $\cl_{\bM}(X) \subseteq  \cl_{\bM}(Y)$ for all $X, Y \subseteq V$ with $X \subseteq Y$. 
\item \label{clax4} $v \in  \cl_{\bM} (X  + u ) \setminus \cl_{\bM} (X)$ implies that $u \in  \cl_{\bM} (X  + v ) \setminus \cl_{\bM} (X)$ for all  $u,v \in V$ and all  $X \subseteq V$. 
\end{enumerate}

It is known that general closure operators are defined as the ones that satisfy the axioms \ref{clax1}, \ref{clax2} and \ref{clax3}, and are equivalent to operators  $\FC_h$ for definite Horn functions $f$.  Axiom \ref{clax4} is called {\em Mac Lane–Steinitz exchange property} \cite{Welsh1976}. As we have seen in Theorem~\ref{thm:summary}\ref{it:e}, it is generalized to closure operators of hypergraph Horn functions.  

\section{Matroid Horn Functions}
\label{sec:matroid}

We consider hypergraph Horn functions associated to families of circuits of a matroid, and study their properties and relations to matroid theory. We call a definite Horn function \emph{matroidal} or \emph{matroid Horn} if $h=\Phi_{\cC}$ for the family of circuits $\cC$ of a matroid $\bM=(V,\cC)$. 

For a matroid Horn function $h$, let us call its CNF representation $h=\Phi_\cC$ \emph{canonical} if the hypergraph $\cC$ satisfies the circuit axioms \ref{cax1}--\ref{cax3}, i.e., $\cC$ is the family of circuits of some matroid. Our first main result provides characterizations of matroid Horn functions in terms of their canonical representations.

\begin{theorem}\label{t-matroidal-formulaic-equivalences}
Let $\cC\subseteq 2^V$ be a nontrivial Sperner hypergraph and let $h$ be the hypergraph Horn function represented by $\Phi_{\cC}$. Then the following are equivalent.
\begin{enumerate}[label={\rm (\roman*)}]\itemsep0em
\item \label{fo-equiv:i}  $\cC$ satisfies the circuit axiom \ref{cax3}.
\item \label{fo-equiv:ii}  $\cK(h)=\cC^{dc}$.
\item \label{fo-equiv:iii}  $\cM(h)=\cC^{dcdc}$.
\item \label{fo-equiv:iv}  $\cT(h)=\left(\cC^{dcdc}\right)^{\cap}$. 
\end{enumerate}
\end{theorem}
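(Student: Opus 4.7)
The plan is to establish the four conditions equivalent via the cycle $\ref{fo-equiv:ii}\Leftrightarrow\ref{fo-equiv:iii}$, $\ref{fo-equiv:i}\Rightarrow\ref{fo-equiv:ii}$, $\ref{fo-equiv:ii}\Rightarrow\ref{fo-equiv:i}$, $\ref{fo-equiv:i}\Rightarrow\ref{fo-equiv:iv}$, and $\ref{fo-equiv:iv}\Rightarrow\ref{fo-equiv:iii}$. The first equivalence is immediate from Lemma~\ref{l-E-M(h)-K(h)} together with the involution $X^{dccd}=X$ that holds for every Sperner hypergraph. The bridge between the matroid side and the Horn side is the observation, read off from the definition of $\Phi_{\cC}$, that a set $T\subseteq V$ is a true set of $h$ if and only if no $C\in\cC$ satisfies $|C\setminus T|=1$. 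When \ref{cax3} holds and $\bM=(V,\cC)$ is therefore a matroid, this is exactly the matroid condition that $T$ be closed, so $\cT(h)$ is the family of flats of $\bM$ and $\FC_h=\cl_{\bM}$. Keys of $h$ then coincide with spanning sets of $\bM$, minimal keys with bases, and~\eqref{e-bases-circuits} delivers \ref{fo-equiv:ii}. Likewise $\cM(h)$ is the family of hyperplanes of $\bM$, which equals $\cC^{dcdc}$ by duality, and the standard matroid fact that every flat is the intersection of the hyperplanes containing it (proved, for instance, by noting that for each $v$ outside a flat $F$ the contraction $\bM/F$ yields a hyperplane through $F$ avoiding $v$) gives~\ref{fo-equiv:iv}.

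The substantive work is $\ref{fo-equiv:ii}\Rightarrow\ref{fo-equiv:i}$, which I would establish by contraposition. Suppose \ref{cax3} fails, witnessed by distinct $C_1,C_2\in\cC$ and $u\in C_1\cap C_2$ with $S:=(C_1\cup C_2)-u$ containing no element of $\cC$. Then $S$ is independent, so it extends to a maximal independent set $B\in\cC^{dc}$, and necessarily $u\notin B$, for otherwise $C_1\subseteq B$. Assuming \ref{fo-equiv:ii}, the set $B$ would be a minimal key, i.e., $\FC_h(B)=V$ while $\FC_h(B-w)\neq V$ for every $w\in B$. I would contradict this by picking $w\in C_1\setminus C_2$, which exists since $\cC$ is Sperner and $C_1\neq C_2$: plainly $w\in S\subseteq B$ and $w\neq u$. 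Because $w\notin C_2$ we have $C_2-u\subseteq B-w$, so the implicate $(C_2-u)\to u$ forces $u\in\FC_h(B-w)$; then $C_1-w\subseteq (B-w)\cup\{u\}\subseteq\FC_h(B-w)$, so $(C_1-w)\to w$ forces $w\in\FC_h(B-w)$. Hence $B\subseteq\FC_h(B-w)$ and $\FC_h(B-w)=\FC_h(B)=V$, a contradiction.

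Finally, for $\ref{fo-equiv:iv}\Rightarrow\ref{fo-equiv:iii}$ I would observe that $\cM(h)$ is the family of maximal proper elements of $\cT(h)=(\cC^{dcdc})^\cap$, and that $\cC^{dcdc}$ is a Sperner hypergraph whose members are all proper subsets of $V$ (using the nontriviality of $\cC$). Each member of $\cC^{dcdc}$ is therefore maximal in $(\cC^{dcdc})^\cap\setminus\{V\}$, and conversely every maximal proper element of $(\cC^{dcdc})^\cap$ is an intersection of members of $\cC^{dcdc}$ and must equal one of them by Sperner-ness. I expect the main obstacle to be the argument of the second paragraph: the choice $w\in C_1\setminus C_2$ together with the two-step chaining---first using $C_2$ to recover $u$ and then using $C_1$ to recover $w$---is exactly the spot where the weak circuit elimination axiom is really needed, and it is what pins down \ref{cax3} from the purely structural condition $\cK(h)=\cC^{dc}$.
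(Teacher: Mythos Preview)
Your proof is correct, and the overall architecture differs from the paper's in an interesting way. The paper proves the cycle $\ref{fo-equiv:i}\Rightarrow\ref{fo-equiv:ii}\Rightarrow\ref{fo-equiv:iii}\Rightarrow\ref{fo-equiv:i}$ together with $\ref{fo-equiv:i}\Rightarrow\ref{fo-equiv:iv}\Rightarrow\ref{fo-equiv:i}$, and both returns to $\ref{fo-equiv:i}$ go through an auxiliary result (Lemma~\ref{l-nonmatroidal-C}): if \ref{cax3} fails, one constructs $B\in\cC^{dc}$ containing $S=(C_1\cup C_2)-v$, then extends $B-u$ (for some $u\in C_2\setminus C_1$) to a set $T\in\cC^{dcdc}$, and argues that either $|C_1\setminus T|=1$ or $|C_2\setminus T|=1$; thus some member of $\cC^{dcdc}$ is a false set of $h$, killing $\ref{fo-equiv:iii}$ and $\ref{fo-equiv:iv}$ at once.

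You start from the same $B$, but instead of passing to $\cC^{dcdc}$ you stay inside $\cC^{dc}$ and show that $B$ cannot be a \emph{minimal} key: deleting $w\in C_1\setminus C_2$ and using $C_2$ then $C_1$ in two forward-chaining steps recovers $B$ inside $\FC_h(B-w)$. This gives $\ref{fo-equiv:ii}\Rightarrow\ref{fo-equiv:i}$ directly and is arguably more elementary, since it never leaves the family $\cC^{dc}$ or invokes the second level $\cC^{dcdc}$. Your route back from $\ref{fo-equiv:iv}$ is also different: rather than re-using the false-set argument, you deduce $\ref{fo-equiv:iii}$ from $\ref{fo-equiv:iv}$ by the purely order-theoretic observation that the maximal proper members of $(\cC^{dcdc})^\cap$ are exactly $\cC^{dcdc}$, using only that $\cC^{dcdc}$ is Sperner and contains no copy of $V$.

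What the paper's approach buys is that Lemma~\ref{l-nonmatroidal-C} is of independent interest (the authors flag it as an ``implicate-duality'' statement between $\cC$ and $\cC^{dcdc}$), and it dispatches two implications simultaneously. What your approach buys is self-containment: the non-obvious direction is a short two-clause chaining inside $\Phi_\cC$, with no detour through an auxiliary hypergraph family.
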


Let us remark that claim \ref{fo-equiv:ii} in the above theorem means that the minimal keys of a matroid Horn function are exactly the bases of the corresponding matroid by \eqref{e-bases-circuits}. Furthermore, claims \ref{fo-equiv:iii} and \ref{fo-equiv:iv} imply that the set of characteristic models of a matroid Horn function are exactly its maximal nontrivial true sets. Note that while $\cC^{dcdc}$ can be the characteristic set of $\Phi_\cC$ only for matroid Horn functions, $\cM(h)$ may be the characteristic set of a non-matroidal hypergraph Horn function, too, see Example \ref{e-nonmatroidal-M-char}. 

Let us recall that every Boolean function $f$ has a unique set of prime implicates, and the unique CNF representation that contains all prime implicates of $f$ is called the \emph{complete CNF} of $f$, see e.g. \cite{CH11}. Our next main result provides characterizations of matroid Horn functions in terms of their complete CNF.

\begin{theorem}\label{t-matroidal-functional-equivalences}
For a definite Horn function $h$, the following are equivalent. 
\begin{enumerate}[label={\rm (\roman*)}]\itemsep0em
\item \label{fu-equiv:i} The function $h$ is matroid Horn.
\item \label{fu-equiv:ii} The complete CNF of $h$ is circular.
\item \label{fu-equiv:iii} The implicate-dual function $h^i$ is matroid Horn. 
\item \label{fu-equiv:iv} The complete CNF of $h^i$ is circular. 
\end{enumerate}
\end{theorem}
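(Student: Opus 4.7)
The proof decomposes into three blocks: (i)$\Leftrightarrow$(ii), (iii)$\Leftrightarrow$(iv), and (i)$\Leftrightarrow$(iii). Since $h^i$ is always hypergraph Horn (hence definite Horn) by the prior results recalled in the excerpt, the second block follows immediately by applying the first to $h^i$. The real content is therefore (i)$\Leftrightarrow$(ii) and the duality bridge (i)$\Leftrightarrow$(iii).

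For (i)$\Rightarrow$(ii), I would identify the prime implicates of $h=\Phi_\cC$ with the clauses $(C-v)\to v$ for $C\in\cC$ and $v\in C$. A clause $B\to v$ with $v\notin B$ is an implicate iff $v\in\cl_\bM(B)$, and prime iff $B$ is minimal with this property. A minimal such $B$ must be independent in $\bM$: any circuit $C'\subseteq B$ is disjoint from $v$, so removing an element of $C'$ preserves $v\in\cl_\bM(\cdot)$ and contradicts minimality. Then $B+v$ is forced to be the unique circuit through $v$ inside $B+v$, so $B+v\in\cC$ and $B=(B+v)-v$. The complete CNF therefore equals $\Phi_\cC$, which is circular by construction. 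Conversely, for (ii)$\Rightarrow$(i), set $\cC=\{B+v\mid(B\to v)\text{ is a prime implicate of }h\}$; circularity groups the prime implicates into the rotations $\{(C-u)\to u\mid u\in C\}$ for each $C\in\cC$, so $h$ equals its complete CNF, which equals $\Phi_\cC$. Axiom \ref{cax1} is immediate, and \ref{cax2} follows from primality (a strict inclusion $C_1\subsetneq C_2$ with $u\in C_1$ would give the prime implicate $(C_2-u)\to u$ whose body strictly contains $C_1-u$, contradicting minimality). The main obstacle is \ref{cax3}: for distinct $C_1,C_2\in\cC$ with $u\in C_1\cap C_2$, pick $v\in C_1\setminus C_2$ (available by \ref{cax2}); the implicate $(C_2-u)\to u$ yields $u\in\FC_h((C_1\cup C_2)-u-v)$ because $v\notin C_2$, and combining this with $(C_1-v)\to v$ via closure transitivity gives $v\in\FC_h((C_1\cup C_2)-u-v)$. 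Any prime implicate $B\to v$ witnessing this inclusion then produces a circuit $C_3=B+v\subseteq(C_1\cup C_2)-u$.

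For (i)$\Rightarrow$(iii), assume $h=\Phi_\cC$ for the matroid $\bM=(V,\cC)$. I would show that $\cI(h)$ equals the family of cyclic sets (unions of circuits) of $\bM$: by definition $I\in\cI(h)$ iff $v\in\cl_\bM(I-v)$ for every $v\in I$, and axiom \ref{clax4} together with the unique-circuit property translates this to every $v\in I$ belonging to some circuit contained in $I$, i.e., $I$ being a union of circuits. Standard matroid duality identifies the cyclic sets of $\bM$ with the complements of flats of the dual matroid $\bM^*=(V,\cC^*)$, so $\cT(h^i)=\cI(h)^c$ is exactly the family of flats of $\bM^*$; Theorem~\ref{t-matroidal-formulaic-equivalences}\ref{fo-equiv:iv} applied to $\bM^*$ then identifies this with $\cT(\Phi_{\cC^*})$, yielding $h^i=\Phi_{\cC^*}$ matroid Horn. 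For (iii)$\Rightarrow$(i), apply this direction to $h^i$: then $(h^i)^i$ is matroid Horn, and invoking the involutivity of implicate-duality on hypergraph Horn functions returns $h$ itself, which is therefore matroid Horn.

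The principal obstacle will be axiom \ref{cax3} in (ii)$\Rightarrow$(i), where the logical condition that the complete CNF is circular must be transformed into the combinatorial circuit-elimination property through a careful chain of forward-chaining steps; the delicate choice is the auxiliary element $v\in C_1\setminus C_2$, which lets us sidestep $v$ while still invoking both rotational implicates. A secondary technical point is the identification of $\cI(h)$ with cyclic sets in (i)$\Rightarrow$(iii), which rests on the Mac Lane--Steinitz exchange axiom \ref{clax4}.
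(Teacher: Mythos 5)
Your proof is correct and, for most of the theorem, coincides with the paper's: the decomposition into (i)$\Leftrightarrow$(ii), then (iii)$\Leftrightarrow$(iv) by applying that block to $h^i$ (which is always hypergraph Horn), plus a duality bridge (i)$\Leftrightarrow$(iii), is exactly the paper's plan, and your verification of axiom \ref{cax3} in (ii)$\Rightarrow$(i) is the paper's argument with the roles of $u$ and $v$ interchanged. Where you genuinely diverge is in how the two key steps are justified. For (i)$\Rightarrow$(ii), the paper derives ``every prime implicate is a circuit clause'' from the one-step forward-chaining property of Lemma~\ref{t-new-core}\ref{core:v}, whereas you argue directly inside the matroid (a minimal body is independent, so the unique circuit of $B+v$ forces $B=C-v$); the two are interchangeable, but you should add the one-line converse check that every circuit clause is itself prime (immediate from \ref{cax2}), since otherwise ``the complete CNF equals $\Phi_\cC$'' is only an inclusion. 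For (i)$\Leftrightarrow$(iii), the paper invokes Lemma~\ref{t-matroid-dual}, i.e., the known fact that $\cC^{dcd}$ is the circuit family of the dual matroid combined with Theorem~\ref{t-matroidal-formulaic-equivalences}; you instead compute $\cI(h)$ explicitly as the family of cyclic sets of $\bM$ and pass to complements to land on the flats of $\bM^*$. Your version is more self-contained in that it makes visible exactly where $\cT(h^i)=\cI(h)^c$ enters, while the paper's is shorter because it outsources the duality entirely to matroid theory. One caveat you share with the paper rather than introduce: the step (iii)$\Rightarrow$(i) rests on $(h^i)^i=h$, which holds for hypergraph Horn $h$ but for a general definite Horn $h$ only gives $(h^i)^i=h^{\circ}$; neither your argument nor the paper's appeal to Lemma~\ref{t-matroid-dual} explains why a definite Horn $h$ whose $i$-dual is matroid Horn must itself be hypergraph Horn, so this is an inherited imprecision, not a new gap.
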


For the proof of the theorems, we need a number of technical lemmas providing additional characterizations of matroid Horn functions that might be of interest on their own. In fact, we will show that if $h=\Phi_\cC$ for the family of circuits $\cC$ of a matroid $\bM$, then the implicate-dual $h^i$ corresponds to the so called {\em dual matroid} $\bM^*$ the bases of which are the complements of the bases of $\bM$. Thus, we have $h^i=\Phi_{\cC^{dcd}}$, where the family of circuits $\cC^{dcd}$ of the dual matroid $\bM^*$ is also known as the family of \emph{cuts} of matroid $\bM$. 

Two closure operators $\cl_{\bM}$ and $\FC_{h}$ was introduced both for matroids $\bM$ and for definite Horn functions $h$. The next lemma shows that they coincide if they are from circuits of matroids. 

\begin{lemma}\label{lem:cl}
Let $\cC$ be the family of circuits of a matroid $\bM$, and let $h$ be a matroid Horn function represented by $\Phi_\cC$. Then we have $\FC_{h}(X)=\cl_{\bM}(X)$ for all $X\subseteq V$.
\end{lemma}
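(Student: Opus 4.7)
The plan is to show that a single round of forward chaining starting from any $X$ already produces $\cl_\bM(X)$; once this is established, idempotency of the matroid closure (axiom \ref{clax2}) forces forward chaining to stabilize after one step, so $\FC_h(X)=\cl_\bM(X)$.

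First I would unpack the definitions. By construction, $\Phi_\cC$ contains, for every circuit $C\in\cC$ and every $v\in C$, the definite Horn clause $(C-v)\to v$. Consequently, $\FC_{\Phi_\cC}^1(X)$ is obtained from $X$ by adjoining every $v\in V\setminus X$ for which some circuit $C\in\cC$ satisfies $v\in C$ and $C\subseteq X+v$. The task therefore reduces to the well-known matroidal fact that, for $v\in V\setminus X$, we have $v\in\cl_\bM(X)$ if and only if some circuit through $v$ is contained in $X+v$.

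I would verify this equivalence directly from the independence and circuit axioms. For one direction, if $v\in \cl_\bM(X)\setminus X$, then extending a maximal independent subset $I$ of $X$ gives a dependent set $I+v$, so the unique fundamental circuit contained in $I+v$ passes through $v$ and lies in $X+v$. Conversely, if $v\in C\subseteq X+v$ for some $C\in\cC$, then $C-v$ is independent and extends to a maximal independent subset $I$ of $X$; since $I+v\supseteq C$ is dependent we obtain ${\it rank}_\bM(X+v)={\it rank}_\bM(X)$, so $v\in\cl_\bM(X)$.

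Combining these steps gives $\FC_{\Phi_\cC}^1(X)=\cl_\bM(X)$. Applying the same identity to $\cl_\bM(X)$ in place of $X$ and invoking axiom \ref{clax2} yields
\[
\FC_{\Phi_\cC}^2(X)=\FC_{\Phi_\cC}^1(\cl_\bM(X))=\cl_\bM(\cl_\bM(X))=\cl_\bM(X)=\FC_{\Phi_\cC}^1(X),
\]
so the forward chaining sequence stabilizes already at the first step, and hence $\FC_h(X)=\FC_{\Phi_\cC}(X)=\cl_\bM(X)$. I do not anticipate a genuine obstacle: the argument is essentially a dictionary translation between two descriptions of the same object, and the only substantive ingredient is the circuit characterization of the matroid closure, which is a standard consequence of the circuit axioms.
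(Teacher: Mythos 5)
Your proof is correct, but it is organized differently from the paper's. The paper works with the fixed-point characterization of $\FC_h$: it shows that $\cl_{\bM}(X)$ is a true set of $h$ containing $X$ (so $\FC_h(X)\subseteq\cl_{\bM}(X)$, since $\FC_h(X)$ is the smallest such set), and that $\FC_h(X)$ is a closed set of $\bM$ (so $\cl_{\bM}(X)\subseteq\FC_h(X)$); both halves rest on the observation that $T$ is a false set of $\Phi_\cC$ exactly when some circuit $C$ satisfies $|C\setminus T|=1$. You instead work with the iterative description of forward chaining, proving the pointwise identity $\FC^1_{\Phi_\cC}(X)=\cl_{\bM}(X)$ and then invoking idempotency \ref{clax2} to get stabilization after one round. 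The substantive ingredient is the same in both arguments, namely the circuit characterization of the matroid closure ($v\in\cl_{\bM}(X)\setminus X$ iff some circuit $C$ has $v\in C\subseteq X+v$), and your verification of it from the rank/independence axioms is sound in both directions. What your route buys is a stronger conclusion obtained for free: the one-step convergence $\FC_h(X)=\FC^1_{\Phi_\cC}(X)$, which the paper does not extract here but proves separately as part of Lemma~\ref{t-new-core}\ref{core:v}. What the paper's route buys is brevity and independence from the mechanics of forward chaining: it never needs to compute $\FC^1$ or argue about stabilization, only to match the two notions of ``closed set.'' Either proof is acceptable for the lemma as stated.
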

\begin{proof}
By definition, $\FC_{h}(X)$ is the smallest true set of $h$ containing $X$. Observe that $\cl_{\bM}(X)$ is a true set of $h$ containing $X$. Indeed, if $\cl_{\bM}(X)$ is not a true set, then there exists a set $C\in\cC$ such that $|C\setminus\cl_{\bM}(X)|=1$. But then ${\it rank}_{\bM}(\cl_{\bM}(X) \cup  C)= {\it rank}_{\bM}(\cl_{\bM}(X))$, contradicting $\cl_{\bM}(X)$ being closed. This shows that $\FC_{h}(X)\subseteq\cl_{\bM}(X)$.

To see the other direction, we show that $\FC_{h}(X)$ is a closed set in $\bM$. If this does not hold, then there is an element $v\in V\setminus \FC_{h}(X)$ such that ${\it rank}_{\bM}(\FC_{h}(X)+v)={\it rank}_{\bM}(\FC_{h}(X))$. That is, $\FC_{h}(X)+v$ contains a circuit $C\in\cC$ with $|C\setminus \FC_{h}(X)|=1$, which implies that $\FC_{h}(X)$ is not a true set of $h$, a contradiction.  Thus we get $\cl_{\bM}(X)\subseteq \FC_{h}(X)$.  
\end{proof}

Once we understand Lemma \ref{lem:cl}, we can see the following relationship between matorids and matroid Horn functions in Table \ref{tab:table1}. 

\begin{table}[htb]
    \centering
    \renewcommand*{\arraystretch}{1.5}
    \begin{tabular}{|p{0.36\textwidth}|p{0.36\textwidth}|}
    \hline
    \rowcolor{Gray}
    \makecell{\textbf{Matroid $\bM$ with} \\ \textbf{circuit family $\cC$}} & \makecell{\textbf{Matroid Horn function $h$}\\\textbf{represented by $\Phi_\cC$}}  \\
    \hhline{|=|=|}
         \makecell{Bases of $\bM$} & \makecell{Minimal keys of $h$} \\
         \hline
          \makecell{Hyperplanes of $\bM$} & \makecell{Maximal nontrivial true sets of $h$}\\
         \hline
          \makecell{Closed sets of $\bM$} & \makecell{True sets of $h$}\\
          \hline
    \end{tabular}
    \caption{Correspondence between matroids $\bM$ and matroid Horn functions $h$ defined from the circuits $\cC$ of $\bM$ as in Lemma \ref{lem:cl}.}
    \label{tab:table1}
\end{table}

These observations together with matroid theory show that \ref{fo-equiv:ii}--\ref{fo-equiv:iv} of Theorem \ref{t-matroidal-formulaic-equivalences} follows from \ref{fo-equiv:i}. 

\begin{lemma}\label{lemma-matroid-formul-part1}
Let $\cC\subseteq 2^V$ be a hypergraph satisfying the circuit axioms \ref{cax1}--\ref{cax3}, and let $h$ be the hypergraph Horn function represented by $\Phi_{\cC}$. Then the following hold. 
	\begin{enumerate}[label={\rm (\alph*)}]\itemsep0em
		\item \label{p1:a} $\cK(h)=\cC^{dc}$.
		\item \label{p1:b} $\cM(h)=\cC^{dcdc}$.
		\item \label{p1:c} $\cT(h)=\left(\cC^{dcdc}\right)^{\cap}$. 
	\end{enumerate}
\end{lemma}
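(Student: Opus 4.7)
The plan is to leverage Lemma \ref{lem:cl}, which identifies the Horn closure $\FC_h$ with the matroid closure $\cl_\bM$ whenever $\cC$ is the circuit family of a matroid $\bM$. Once this identification is in hand, parts \ref{p1:a}--\ref{p1:c} become translations of three basic facts about matroids: bases are minimal spanning sets, hyperplanes are the maximal proper flats, and every flat is the intersection of the hyperplanes containing it. The duality identity \eqref{e-bases-circuits} then re-expresses the answers in terms of $\cC$.

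For part \ref{p1:a}, I would unfold the definition: $K\in\cK(h)$ iff $K$ is a minimal set with $\FC_h(K)=V$, and by Lemma \ref{lem:cl} this is equivalent to $K$ being a minimal spanning set of $\bM$. A standard one-line matroid argument then shows that minimal spanning sets coincide with bases: if such a $K$ were dependent it would contain a circuit $C$, and then for any $c\in C$ the set $K-c$ would still span, contradicting minimality. Therefore $\cK(h)=\cB$, and $\cB=\cC^{dc}$ by \eqref{e-bases-circuits}. Part \ref{p1:b} then follows immediately by combining part \ref{p1:a} with Lemma \ref{l-E-M(h)-K(h)}: $\cM(h)=\cK(h)^{dc}=(\cC^{dc})^{dc}=\cC^{dcdc}$.

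For part \ref{p1:c}, Lemma \ref{lem:cl} identifies $\cT(h)$ with the family of flats of $\bM$, so the task reduces to showing that the family of flats equals the intersection closure of the family of hyperplanes (with $V$ obtained as the intersection of the empty family by the paper's convention). The inclusion ``$\supseteq$'' is immediate since intersections of closed sets are closed. For ``$\subseteq$'', given a proper flat $F$ and $v\in V\setminus F$ I would exhibit a hyperplane $H\supseteq F$ avoiding $v$ as follows: pick a basis $B$ of $F$, note that $B+v$ is independent since $v\notin\cl_\bM(B)=F$, extend it to a basis $B^*$ of $\bM$, and set $H=\cl_\bM(B^*-v)$. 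Then $H$ is a flat of rank ${\it rank}_\bM(V)-1$, hence a hyperplane; it contains $F=\cl_\bM(B)$; and it avoids $v$, since $v\in H$ would force $B^*$ to be dependent. Hence $F$ is the intersection of the hyperplanes containing it, and combining with part \ref{p1:b} yields $\cT(h)=\cM(h)^\cap=(\cC^{dcdc})^\cap$. The only mildly non-mechanical ingredient is this hyperplane-separation step; everything else is direct bookkeeping against Lemma \ref{lem:cl} and the duality formulas of \eqref{e-bases-circuits}.
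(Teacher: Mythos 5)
Your proposal is correct and follows essentially the same route as the paper: both rest on Lemma~\ref{lem:cl} identifying $\FC_h$ with $\cl_\bM$, after which (a)--(c) reduce to the standard matroid facts that bases are the minimal spanning sets, hyperplanes are the maximal proper flats, and every flat is an intersection of hyperplanes. The only difference is that the paper simply cites these facts (to Welsh) and reads off $\cC^{dc}$ and $\cC^{dcdc}$ as the bases and hyperplanes, whereas you supply the standard proofs explicitly and route (b) through Lemma~\ref{l-E-M(h)-K(h)}; this is a matter of detail, not of approach.
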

\begin{proof}
It is known that $\cC^{dc}$ and $\cC^{dcdc}$ are families of bases and hyperplanes of matroid $\cM$,  respectively, see e.g. \cite{Welsh1976}, proving \ref{p1:a} and \ref{p1:b}. Recall that any closed set of the matroid can be obtained as the intersection of hyperplanes, which immediately proves \ref{p1:c}.
\end{proof}

By Lemma~\ref{lemma-matroid-formul-part1},  we can say that the converse of the lemma is a nontrivial part of Theorem \ref{t-matroidal-formulaic-equivalences}. Toward showing it, we next provide characterizations of matroid Horn functions in terms of the rank function, the closure, and the core, which are interesting on their own. 

We associate a rank function to an arbitrary hypergraph $\cH\subseteq 2^V$. For a subset $X\subseteq V$ its \emph{rank} is defined as 
\begin{equation}\label{e-rank}
	r_{\cH}(X)  =  \max_{B\in\cH^{dc}} |B\cap X|.
\end{equation}
Note that this coincides with the standard matroid rank function, whenever $\cH$ is the family of circuits of a matroid, see e.g., \cite{Welsh1976}. 

\begin{lemma}\label{t-new-core}
Let $\cC\subseteq 2^V$ be a nontrivial Sperner hypergraph, and let $h$ be the hypergraph Horn function represented by $\Phi_\cC$.  Then the following are equivalent:
\begin{enumerate}[label={\rm (\roman*)}]\itemsep0em
	\item \label{core:i} $\cC$ satisfies the circuit axiom \ref{cax3}.
	\item \label{core:ii} For any $X\subseteq V$ and any $v \in V \setminus X$, we have 
	$r_\cC(X+v)=r_\cC(X)$ if and only if there exists $C\in\cC$ with $C\setminus X =\{v\}$.
	\item \label{core:iii} For any $X\subseteq V$,  we have $r_\cC(\FC_h(X)) = r_\cC(X)$.
    \item \label{core:iv} For any $X\subseteq V$, we have $\FC_h(X) = \{ v\in V\mid r_\cC(X+v)=r_\cC(X)$.
	\item \label{core:v} For any   $X\subseteq V$,  we have $\FC_h(X) = \FC^1_{\Phi_\cC}(X)$.
	\item \label{core:vi} For any $X\subseteq V$, we have $\IS_\cC(X) = \{ v\in X\mid r_\cC(X-v)=r_\cC(X)\}$. 
\end{enumerate}
\end{lemma}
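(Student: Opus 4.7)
My plan is to prove the equivalences by a star-shaped strategy: first show that (i) implies each of (ii)--(vi) via standard matroid theory using Lemma~\ref{lem:cl}, and then show that each of (ii)--(vi) in turn implies (i) using a common test set $X=(C_1\cup C_2)-u-v$, where $C_1,C_2\in\cC$ are distinct circuits meeting at $u\in C_1\cap C_2$ and $v\in C_2\setminus C_1$ (nonempty by the Sperner property).

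For the forward direction, once (C3) holds, $\cC$ is the circuit family of a matroid $\bM$, so by Lemma~\ref{lem:cl} we have $\FC_h=\cl_{\bM}$ and $r_\cC$ coincides with the matroid rank. Each of (ii)--(vi) is then a standard matroid identity: (ii) is the circuit-through-an-element description of closure, (iii) is rank-preservation under closure, (iv) is the rank definition of closure, (v) is one-step computability of matroid closure from its circuits, and (vi) is the familiar characterization of elements of $X$ lying on a circuit contained in $X$.

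The converses share a common prelude. Since $v\notin C_1$ we have $C_1-u\subseteq X$, so $u\in\FC^1_{\Phi_\cC}(X)\subseteq\FC_h(X)$; combining with $C_2-v\subseteq X+u$ and idempotence of $\FC_h$, one also obtains $v\in\FC_h(X)$. The goal is to exhibit a circuit $C_3\in\cC$ with $C_3\subseteq(C_1\cup C_2)-u=X+v$. Under (v) this is immediate: $v\in\FC_h(X)=\FC^1_{\Phi_\cC}(X)$ forces a circuit $C_3\in\cC$ with $C_3-v\subseteq X$. Under (ii), two applications yield $r_\cC(X)=r_\cC(X+u)=r_\cC(X+u+v)$, hence by monotonicity $r_\cC(X+v)=r_\cC(X)$, and a third application of (ii) produces $C_3$ with $C_3\setminus X=\{v\}$. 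Under (iii) or (iv), idempotence of $\FC_h$ gives $\FC_h(X+v)=\FC_h(X)$, whence $r_\cC(X+v)=r_\cC(X)\leq|X|<|X+v|$, and the elementary observation that $r_\cC(Y)<|Y|$ is equivalent to $Y$ containing a hyperedge of $\cC$ yields $C_3\subseteq X+v$. For (vi), a triple application to $Z=C_1\cup C_2$ (to $u$ via $C_1\subseteq Z$; to $v_1\in C_1\setminus C_2$ via $C_1\subseteq Z$; then to $u$ via $C_2\subseteq Z-v_1$) gives $r_\cC(Z-u)=r_\cC(Z)=r_\cC(Z-v_1)=r_\cC(Z-v_1-u)\leq|Z|-2=|Z-u|-1$, so $Z-u=(C_1\cup C_2)-u$ contains a circuit.

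The main obstacle is the passage from the ``rank-flavored'' conditions (iii), (iv), (vi) back to the combinatorial axiom \ref{cax3}, since a mere rank equality in a general Sperner hypergraph does not a priori produce a circuit. This is resolved by the bookkeeping observation $r_\cC(Y)<|Y|\Leftrightarrow Y$ is dependent, together with the tailored choice of test set $X=(C_1\cup C_2)-u-v$, which forces the rank of $X+v$ (or of $Z-u$ in the (vi)-argument) to strictly drop below its cardinality and thereby produces the required $C_3$.
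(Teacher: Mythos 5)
Your proof is correct, but it is organized quite differently from the paper's. The paper establishes the cycle \ref{core:i}$\Rightarrow$\ref{core:ii}$\Rightarrow$\ref{core:iii}$\Rightarrow$\ref{core:iv}$\Rightarrow$\ref{core:v}$\Rightarrow$\ref{core:i} together with \ref{core:ii}+\ref{core:v}$\Rightarrow$\ref{core:vi}$\Rightarrow$\ref{core:i}; in particular, the intermediate implications \ref{core:ii}$\Rightarrow$\ref{core:iii}$\Rightarrow$\ref{core:iv}$\Rightarrow$\ref{core:v} are proved directly for an arbitrary Sperner hypergraph, without assuming \ref{cax3}, so only two ``return'' arguments to \ref{core:i} are needed. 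You instead use a star: \ref{core:i} implies each of \ref{core:ii}--\ref{core:vi} by standard matroid theory (legitimate, via Lemma~\ref{lem:cl} and the remark that $r_\cC$ is the matroid rank once \ref{cax3} holds; this is also how the paper handles \ref{core:i}$\Rightarrow$\ref{core:ii}), and each of \ref{core:ii}--\ref{core:vi} returns to \ref{core:i} via the uniform test set $X=(C_1\cup C_2)\setminus\{u,v\}$. Your \ref{core:v}$\Rightarrow$\ref{core:i} argument coincides with the paper's; the other four converses are new relative to the paper and all check out: the two-then-one application of \ref{core:ii}, the use of idempotence for \ref{core:iii}/\ref{core:iv}, and the triple application of \ref{core:vi} to $Z=C_1\cup C_2$ and $Z-v_1$ (which is cleaner than the paper's proof by contradiction) are all sound, provided you make explicit the facts you rely on implicitly, namely that $r_\cC(Y)<|Y|$ if and only if $Y$ contains a hyperedge, and that any circuit contained in $Y$ lies inside $\IS_\cC(Y)$ because hyperedges are implicate sets and implicate sets are union-closed. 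The trade-off is that your route outsources all the forward implications to matroid theory and pays for it with five converse arguments, whereas the paper's chain keeps the hypergraph-level implications among \ref{core:ii}--\ref{core:v} visible, which is information the star-shaped proof does not record.
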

\begin{proof}
We prove the lemma by showing \ref{core:i}$\implies$\ref{core:ii}$\implies$\ref{core:iii}$\implies$\ref{core:iv}$\implies$\ref{core:v}$\implies$\ref{core:i} and \ref{core:ii}+\ref{core:v}$\implies$\ref{core:vi}$\implies$\ref{core:i}. 
\vspace{-0.5em}

\paragraph{\ref{core:i}$\implies$\ref{core:ii}} This implication is well-known in matroid theory, see e.g., \cite{Welsh1976}.
\vspace{-0.5em}

\paragraph{\ref{core:ii}$\implies$\ref{core:iii}} By the definition of the closure, we have $\FC_h(X)=\FC_{\Phi_{\cC}}(X)$, and thus we can label the vertices $\FC_h(X)\setminus X=\{v_1,\dots ,v_k\}$ in such a way that the family $\cC$ contains $k$ circuits $C_1,\dots,C_k$ with $C_j\setminus(X\cup \{v_\ell\mid \ell\in[j-1]\}=\{v_j\}$ for $j\in[k]$. This together with \ref{core:ii} implies $r_{\cC}(X)=r_{\cC}(X\cup\{v_1\})=\dots =r_{\cC}(X\cup\{v_j\mid j\in[k]\})=r_{\cC}(\FC_h(X))$, proving \ref{core:iii}.  
\vspace{-0.5em}

\paragraph{\ref{core:iii}$\implies$\ref{core:iv}} By the monotonicity of the rank function, we have $r_\cC(X)\leq r_\cC(X+v) \leq r_\cC(\FC_h(X))$ for all $v\in \FC_h(X)$. Thus it follows from \ref{core:iii} that $r_\cC(X)=r_\cC(X+v)$ for all $v\in \FC_h(X)$, implying that $\FC_h(X) \subseteq \{ v\in V\mid r_\cC(X+v)=r_\cC(X) \}$. 

For the converse inclusion, we shall show that any  $v\in V$ with $r_\cC(X+v)=r_\cC(X)$ belongs to $\FC_h(X)$. Let $B$ be a set in $\cC^{dc}$  such that $r_\cC(X)=|B\cap X|$. By definition, $B\cap X$ is an independent set of $\cC$. Note that $(B \cap X) +v$ is dependent, i.e., there exists a $C \in \cC$ with $v\in C  \subseteq (B \cap X) +v$, since otherwise  there exists a maximal independent set $B' \in \cC^{dc}$ that contains $(B \cap X) +v$, which implies $r_\cC(X+v)>r_\cC(X)$. The existence of such a $C$ implies $v \in \FC_h(X)$, which completes the proof. 
\vspace{-0.5em}

\paragraph{\ref{core:iv}$\implies$\ref{core:v}} Let $B$ be a set in $\cC^{dc}$ such that $r_\cC(X)=|B\cap X|$. Note again that $B\cap X$ is an independent set of $\cC$. By \ref{core:iv}, any $v \in \FC_h(X)$ satisfies that $(B \cap X) +v$ is dependent, which implies the existence of  $C \in \cC$ with $v\in C  \subseteq (B \cap X) +v$. This implies $ v \in \FC^1_{\Phi_\cC}(X)$, completing the proof.  
\vspace{-0.5em}

\paragraph{\ref{core:v}$\implies$\ref{core:i}} Axioms \ref{cax1} and \ref{cax2} hold by the definition of Sperner hypergraphs. To prove the third axiom \ref{cax3}, let us consider two hyperedges $C_1,C_2\in\cC$ with $v\in C_1\cap C_2$. Sine $\cC$ is Sperner, there exists a  variable $u$ in $C_2\setminus C_1$. Define $X=C_1\cup C_2\setminus\{u,v\}$. Then we have $u\in\FC_h(X)$, thus $u\in \FC^1_{\Phi_\cC}(X)$ by \ref{core:v}, implying the existence of a hyperedge $C_3\in\cC$ such that $u\in C_3\subseteq X+u$. Since $X+u=C_1\cup C_2-v$, our claim follows.
\vspace{-0.5em}

\paragraph{\ref{core:ii}+\ref{core:v}$\implies$\ref{core:vi}} Let $X$ be a  subset of $V$. For any $v \in \IS_\cC(X)$, we have  $\IS_\cC(X)-v \overset{h}{\rightarrow} v$. By \ref{core:v}, then there exists a hyperedge $C\in\cC$ such that $v\in C\subseteq \IS_\cC(X)$. Since $C\setminus (X-v)=\{v\}$, it follows from \ref{core:ii} that $r_\cC(X-v)=r_\cC((X-v)+v)=r_\cC(X)$, which means that $v$ belongs to the set in the right-hand side of \ref{core:vi}. 

On the other hand, if  $v\in X$ satisfies  $r_\cC(X-v)=r_\cC(X)$, then there exists a  $B \in \cC^{dc}$ such that $|B \cap X|=|B \cap (X-v) |=r_\cC(X)$. By this, there exists a hyperedge $C\in\cC$ such that $\{v\} \in C\subseteq B\cap X$, which implies that $v\in \IS_\cC(X)$, since $\IS_\cC(X)$ is the unique maximal implicate set contained in $X$. 
\vspace{-0.5em}

\paragraph{\ref{core:vi}$\implies$\ref{core:i}} To see that $\cC$ satisfies the circuit axioms \ref{cax3}, let $C_1$ and $C_2$ be two hyperedges in $\cC$ with $v\in C_1 \cap C_2$ for some $v \in V$. We shall show that  $C_1\cup C_2-v$ contains  a hyperedge in $\cC$, which completes the proof. If it is not the case, then there exists a maximal independent set $B$ in $\cC^{dc}$ that contains $C_1\cup C_2-v$. This implies that  $ r_\cC(C_1\cup C_2-v)=|C_1\cup C_2|-1$. Since $\IS_\cC(C_1\cup C_2)=C_1\cup C_2$,  this together with \ref{core:vi} implies
\begin{equation*}
r_\cC(C_1\cup C_2)=r_\cC(C_1\cup C_2-u)=|C_1\cup C_2|-1 \mbox{ for any }u \in C_1 \cup C_2, 
\end{equation*}
which  further implies that for any $u \in C_1 \cup C_2$ there exists a set $B_u \in \cC^{dc}$ with $B_u \supseteq  C_1\cup C_2-u$. Note however that $C_1\setminus C_2\neq\emptyset$, since $\cC$ is Sperner. This means that  $C_2\subseteq C_1\cup C_2-u\subseteq B_u$ for any $u \in C_1 \setminus C_2$, a contradiction. 
\end{proof}

We show next  an important claim that will be instrumental in the proof of one of our main theorems. We here remark that it can be regarded as implicate-duality of hypergraphs $\cC$ and $\cC^{dcdc}$, which corresponds to  Theorem~\ref{thm:summary}\ref{e2-true-set-of-h}. 

\begin{lemma}\label{l-nonmatroidal-C}
A Sperner hypergraph  $\cC\subseteq 2^V$ satisfies the circuit axioms \ref{cax3} if and only if  $|C\setminus T|\neq 1$ for all $C \in \cC$ and all $T\in \cC^{dcdc}$.
\end{lemma}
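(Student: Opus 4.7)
The plan is to prove both directions separately.

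\textbf{Forward direction.} Assume $\cC$ satisfies \ref{cax3}. By Lemma~\ref{lemma-matroid-formul-part1}\ref{p1:b}, $\cM(h) = \cC^{dcdc}$ where $h = \Phi_\cC$. Since every maximal nontrivial true set is a true set, $\cC^{dcdc} \subseteq \cT(h)$. Unfolding the definition of the circular CNF $\Phi_\cC$, a set $T \subseteq V$ is a true set of $\Phi_\cC$ if and only if each clause $(C - v) \to v$ (for $C \in \cC,\ v \in C$) is satisfied at $T$, equivalently if and only if no $C \in \cC$ has $|C \setminus T| = 1$. The claim follows.

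\textbf{Reverse direction (contraposition).} Suppose \ref{cax3} fails: there exist $C_1, C_2 \in \cC$ and $u \in C_1 \cap C_2$ such that $I = (C_1 \cup C_2) - u$ contains no hyperedge of $\cC$; in particular $I$ is independent. Extend $I$ to a basis $B \in \cC^{dc}$; since $I + u \supseteq C_1$ is dependent, $u \notin B$. By Sperner-ness of $\cC$, $C_2 \not\subseteq C_1$, so pick $w \in C_2 \setminus C_1$; then $w \in C_2 - u \subseteq I \subseteq B$, and from $C_i - u \subseteq B$ together with $u \notin B$ one reads off $(V \setminus B) \cap C_i = \{u\}$ for $i \in \{1,2\}$.

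\textbf{Witness construction and case analysis.} The set $S = (V \setminus B) \cup \{w\}$ is a transversal of $\cC^{dc}$: $S$ meets $B$ via $w$, and for any other basis $B' \neq B$, Sperner-ness of $\cC^{dc}$ gives $B' \not\subseteq B$ and hence $B' \cap (V \setminus B) \neq \emptyset$. Pick a minimal transversal $K \subseteq S$ and set $T = V \setminus K$, so $T \in \cC^{dcdc}$. The key observation is that $K$ must meet $B$ while $S \cap B = \{w\}$, which forces $K \cap B = \{w\}$ and in particular $w \in K$. Now split into two cases. If $u \in K$, then $K \cap C_1 \subseteq S \cap C_1 = \{u\}$ (since $w \notin C_1$), so $K \cap C_1 = \{u\}$ and $|C_1 \setminus T| = |K \cap C_1| = 1$. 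Otherwise $u \notin K$, in which case $K \cap (C_2 - u) \subseteq K \cap B = \{w\}$; combined with $w \in K \cap (C_2 - u)$ this gives $K \cap C_2 = \{w\}$ and $|C_2 \setminus T| = 1$. Either way the orthogonality condition fails. The main creative step is choosing $S$ so that $w$ is forced into every minimal transversal inside it, which reduces the verification to this clean binary case split on whether $u \in K$.
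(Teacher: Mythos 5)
Your proof is correct and follows essentially the same route as the paper's: the forward direction rests on Lemma~\ref{lemma-matroid-formul-part1}\ref{p1:b} together with the true-set description of circular CNFs, and the reverse direction extends the independent set $(C_1\cup C_2)-u$ to a basis $B$ and produces a witness $T\in\cC^{dcdc}$ whose complement lies in $(V\setminus B)+w$, with the same binary case split on the common element. The only difference is presentational: you phrase the witness via a minimal transversal $K\subseteq S$ of $\cC^{dc}$, whereas the paper works directly with the maximal independent set $T=V\setminus K$ of $\cC^{dc}$ containing $B-w$; your version makes explicit why $w$ is forced into $K$, a point the paper leaves terse.
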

\begin{proof}
If $\cC$ satisfies the circuit axiom \ref{cax3}, then Table \ref{tab:table1} and Lemma \ref{lemma-matroid-formul-part1}\ref{p1:b} implies that $\cC^{dcdc}$ consists of the family of hyperplanes of the matroid $\bM$ defined by $\cC$. It is known that $|C \setminus T| \not=1$ holds for any circuit $C\in \cC$ and any hyperplane $T \in \cC^{dcdc}$, which completes the proof of the only-if statement. 

Suppose that the hypergraph $\cC$ violates the circuit axiom \ref{cax3}. Then there exist two hyperedges $C_1,C_2\in\cC$ and a variable $v\in C_1\cap C_2$ such that the set $S=C_1\cup C_2-v$  contains no hyperedge in $\cC$.  This implies that $S$ is an independent set of  $\cC$, and thus it is contained in a maximal independent set $B\in \cC^{dc}$. Since $B \not\supseteq C_1$, we have $v\notin B$. For a variable $u\in C_2\setminus C_1$, there exists a maximal independent set $T$ of $\cC^{dc}$ (i.e., $T \in \cC^{dcdc}$) with $u \not\in T$. Note that such $u$ and $T$ must exist, since $\cC$ and $\cC^{dc}$ are both Sperner. Now, this set $T$ may or may not contain vertex $v$. If $v\in T$, then we have $C_2\setminus T=\{u\}$, while if $v\not\in T$, then we have $C_1\setminus T=\{v\}$. This completes the proof of the if statement. 
\end{proof}

Let us remark that Theorem~\ref{t-matroidal-formulaic-equivalences}\ref{fo-equiv:iv} and Lemma \ref{l-nonmatroidal-C} implies that, for a hypergraph Horn function $h=\Phi_\cC$, the family $\cC^{dcdc}$ contains only true sets of $h$ if and only if $h$ is matroid Horn and $\Phi_\cC$ is its canonical representation. Furthermore, in this case the family $\cC^{dcdc}$ is the set of characteristic models of $h$ by Lemma \ref{lemma-matroid-formul-part1}\ref{p1:c}. 

We are now ready to prove our first main theorem.

\begin{proof}[\textbf{Proof of Theorem \ref{t-matroidal-formulaic-equivalences}}]
	
We prove the theorem by showing \ref{fo-equiv:i}$\implies$\ref{fo-equiv:ii}$\implies$\ref{fo-equiv:iii}$\implies$\ref{fo-equiv:i} and \ref{fo-equiv:i}$\implies$\ref{fo-equiv:iv}$\implies$\ref{fo-equiv:i}. 
\vspace{-0.5em}

\paragraph{\ref{fo-equiv:i}$\implies$\ref{fo-equiv:ii}} The implication follows from Lemma~\ref{lemma-matroid-formul-part1}\ref{p1:a}. 
\vspace{-0.5em}

\paragraph{\ref{fo-equiv:ii}$\implies$\ref{fo-equiv:iii}} The implication follows from  Lemma \ref{l-E-M(h)-K(h)} which claims that $\cM(h)=\cK(h)^{dc}$ for all definite Horn functions, since $\cK(h)=\cC^{dc}$.
\vspace{-0.5em}

\paragraph{\ref{fo-equiv:iii}$\implies$\ref{fo-equiv:i}} Suppose that $\cC$ violates the circuit axiom \ref{cax3}. Then, by Lemma \ref{l-nonmatroidal-C}, we have $C\in \cC$ and $T\in\cC^{dcdc}$ such that $|C\setminus T|=1$. However, this means that  $T$ is a false set of $h$ by Theorem~\ref{thm:summary}\ref{e2-true-set-of-h}, which completes the proof.
\vspace{-0.5em}

\paragraph{\ref{fo-equiv:i}$\implies$\ref{fo-equiv:iv}} The implication follows from Lemma \ref{lemma-matroid-formul-part1}\ref{p1:c}. 
\vspace{-0.5em}

\paragraph{\ref{fo-equiv:iv}$\implies$\ref{fo-equiv:i}} If $\cC$ violates the circuit axiom \ref{cax3}, then Lemma \ref{l-nonmatroidal-C} implies the existence of $C\in \cC$ and $T\in \cC^{dcdc}$ such that $|C\setminus T|=1$. This means that $T$ is a false set of $h$ by Theorem~\ref{thm:summary}\ref{e2-true-set-of-h}, completing the proof.
\end{proof}

Let $\bM$ be a matroid defined by its circuits $\cC$. The so-called dual matroid of $\bM$ has circuits $\cC^{dcd}$, see e.g.\ \cite{Welsh1976}. This together with implicate-duality of hypergraph Horn functions imply the following characterizations.  

\begin{lemma}\label{t-matroid-dual}
Let $\cC\subseteq 2^V$ be a nontrivial Sperner hypergraph, and let $h$ be the hypergraph Horn function represented by $\Phi_\cC$. Then the following are equivalent.
\begin{enumerate}[label={\rm (\roman*)}]\itemsep0em
    \item \label{dual:i} $\cC$ satisfies the circuit axiom \ref{cax3}.
    \item \label{dual:ii} $\cC^{dcd}$ satisfies the circuit axiom \ref{cax3}.
	\item \label{dual:iii} $h^i=\Phi_{\cC^{dcd}}$.
	\item \label{dual:iv} $\cK(h^i)=\cC^{d}$.
	\item \label{dual:v} $\cM(h^i)=\cC^c$.
	\item \label{dual:vi} $\cT(h^i) = (\cC^{c})^{\cap}$. 
\end{enumerate}
\end{lemma}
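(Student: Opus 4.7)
The plan is to show equivalence in two main phases: first establish \ref{dual:i}$\Leftrightarrow$\ref{dual:ii} via matroid duality and \ref{dual:i}$\Rightarrow$\ref{dual:iii} through an identification of the implicate-set family of $h$; then use \ref{dual:iii} together with \ref{dual:ii} to obtain the structural statements \ref{dual:iv}, \ref{dual:v}, \ref{dual:vi} by applying Theorem~\ref{t-matroidal-formulaic-equivalences} to the representation $\Phi_{\cC^{dcd}}$ of $h^i$. The first equivalence is classical matroid duality: if $\cC$ is the circuit family of a matroid $\bM$, then $\cC^{dcd}$ is the circuit family of its dual $\bM^*$, and the reverse direction follows from $(\cC^{dcd})^{dcd}=\cC$, which in turn uses $\cA^{dd}=\cA$ on Sperner hypergraphs together with $cc=\mathrm{id}$.

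The crucial step is \ref{dual:i}$\Rightarrow$\ref{dual:iii}. I would first prove $\cI(h)=\cC^\cup$. The inclusion $\cC^\cup\subseteq\cI(h)$ is immediate because every $C\in\cC$ is an implicate set of $h=\Phi_\cC$ by construction and $\cI(h)$ is union-closed. For the converse, given $I\in\cI(h)$, Lemma~\ref{lem:cl} gives $v\in\FC_h(I-v)=\cl_\bM(I-v)$ for each $v\in I$, so ${\it rank}_\bM(I-v)={\it rank}_\bM(I)$; this forces $v$ into some circuit $C_v\subseteq I$, whence $I=\bigcup_{v\in I}C_v\in\cC^\cup$. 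Consequently $\cT(h^i)=\cI(h)^c=(\cC^\cup)^c$ coincides with the family of flats of $\bM^*$, by the classical matroid duality that complements of unions of circuits of $\bM$ are precisely the flats of $\bM^*$. By Lemma~\ref{lem:cl} applied to $\bM^*$, the flats of $\bM^*$ equal $\cT(\Phi_{\cC^{dcd}})$, and therefore $h^i=\Phi_{\cC^{dcd}}$.

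Given \ref{dual:iii} together with \ref{dual:ii}, I would apply Theorem~\ref{t-matroidal-formulaic-equivalences} to the canonical matroidal representation $\Phi_{\cC^{dcd}}$ of $h^i$. The Sperner involution identities yield $(\cC^{dcd})^{dc}=\cC^{dcddc}=\cC^{dcc}=\cC^d$ and $(\cC^{dcd})^{dcdc}=(\cC^d)^{dc}=\cC^{ddc}=\cC^c$, directly giving \ref{dual:iv}, \ref{dual:v}, and \ref{dual:vi}; the equivalence \ref{dual:iv}$\Leftrightarrow$\ref{dual:v} also follows from Lemma~\ref{l-E-M(h)-K(h)}. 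To close the cycle back to \ref{dual:i}, I would invoke Theorem~\ref{t-matroidal-formulaic-equivalences} in the reverse direction for the hypergraph $\cC^{dcd}$: matching $\cK(h^i)=\cC^d=(\cC^{dcd})^{dc}$ with the criterion from that theorem forces $\cC^{dcd}$ to satisfy the circuit axiom~\ref{cax3}, which is \ref{dual:ii} and hence \ref{dual:i}. The main obstacle I anticipate is this closure step, which requires certifying that the structural identifications \ref{dual:iv}--\ref{dual:vi} are strong enough to force the matroidal form of $\Phi_{\cC^{dcd}}$; I expect Lemma~\ref{l-nonmatroidal-C} to be the key tool here, producing a contradictory witness pair $(C,T)$ with $|C\setminus T|=1$ whenever \ref{cax3} is violated, in conflict with the assumed identifications.
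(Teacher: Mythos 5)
Your proposal follows essentially the same route as the paper: the equivalence of (i) and (ii) is classical matroid duality, and the remaining equivalences are obtained by applying Theorem~\ref{t-matroidal-formulaic-equivalences} to the hypergraph $\cC^{dcd}$ together with the identification $h^i=\Phi_{\cC^{dcd}}$, which the paper attributes to the remark preceding the lemma while you supply a correct direct argument via $\cI(h)=\cC^\cup$ and Lemma~\ref{lem:cl}. The loose end you flag --- that transferring the backward implications from (iv)--(vi) to (i) requires identifying $\cK(h^i)$ with $\cK(\Phi_{\cC^{dcd}})$, i.e., presupposes (iii) --- is left equally implicit in the paper's two-sentence proof, so your treatment is, if anything, more explicit than the original.
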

\begin{proof}
It is known that $\cC$ satisfies the circuit axiom \ref{cax3} if and only if $\cC^{dcd}$ satisfies it ~\cite{Welsh1976}, which shows the equivalence of \ref{dual:i} and \ref{dual:ii}. The other equivalences can be obtained by applying Theorem \ref{t-matroidal-formulaic-equivalences} to the hypergraph $\cC^{dcd}$. 
\end{proof}

We are now ready to prove our second main result in this section. 

\begin{proof}[\textbf{Proof of Theorem \ref{t-matroidal-functional-equivalences}}]

We prove the theorem by showing \ref{fu-equiv:i}$\implies$\ref{fu-equiv:ii}$\implies$\ref{fu-equiv:i}, \ref{fu-equiv:i}$\iff$\ref{fu-equiv:iii} and \ref{fo-equiv:i}$\iff$\ref{fu-equiv:iv}. 
\vspace{-0.5em}

\paragraph{\ref{fu-equiv:i}$\implies$\ref{fu-equiv:ii}} Let $\cC$ be a hypergraph satisfying the circuit axiom \ref{cax3}, and let $h$ be a matroid Horn function represened by $\Phi_{\cC}$. For a prime implicate $S\to v$ of $h$,  we have $v\in \FC_h(S)$, and thus Lemma \ref{t-new-core}\ref{core:v} implies that $v\in \FC_{\Phi_\cC}^1(S)$. This is equivalent to the existence of a hyperedge $C\in\cC$ such that $C\setminus S=\{v\}$. By the primeness of $S\to v$, we have  $S=C-v$. Consequently, all prime implicates of $h$ are included in $\Phi_\cC$. Furthermore, since $\cC$ is Sperner, all clauses in $\Phi_{\cC}$ are prime implicates of $h$ by Lemma \ref{t-new-core}\ref{core:v}, which shows that the complete CNF of $h$ is indeed the circular CNF.
\vspace{-0.5em}

\paragraph{\ref{fu-equiv:ii}$\implies$\ref{fu-equiv:i}} Assume that the complete CNF of $h$ is the circular CNF $\Phi_{\cC}$ for a  hypergraph $\cC\subseteq 2^V$. Note that $\cC$ is Sperner, since otherwise it contains a non-prime implicate of $h$. We then claim that $\cC$ satisfies the circuit axiom \ref{cax3}. 

Let us consider two hyperedges $C_1$ and $C_2$ in $\cC$ with  $v\in C_1\cap C_2$ for some $v \in V$. Since $\cC$ is Sperner, there exists a variable $u \in C_2\setminus C_1$. Then for the set $X=(C_1\cup C_2)\setminus\{u,v\}$ we have $u\in \FC_{\Phi_{\{C_1,C_2\}}}(X)\subseteq \FC_{\Phi_{\cC}}(X)$. By Lemma \ref{l-E-implicate},  the clause $X\to u$ is an implicate of $h=\Phi_{\cC}$. Therefore there exists a prime implicate $S\to u$ of $h$ for which $S\subseteq X$.  Since $(S\to u)$ is a clause in $\Phi_{\cC}$  by our assumption, we have $C_3=S+u\in\cC$. This completes the proof, since $C_3\subseteq (C_1\cup C_2)\setminus\{v\}$.  
\vspace{-0.5em}

\paragraph{\ref{fu-equiv:i}$\iff$\ref{fu-equiv:iii}} The implications follow from Lemma \ref{t-matroid-dual} and the duality of matroids.
\vspace{-0.5em}

\paragraph{\ref{fo-equiv:i}$\iff$\ref{fu-equiv:iv}} The implications follow by applying the equivalence of \ref{fu-equiv:i} and \ref{fu-equiv:ii} shown above to $h^i$.
\end{proof}

The above results show that the notion of implicate duality of hypergraph Horn functions generalizes the notion of matroid duality. 

Let us add further remarks on the relation of $\cM(h)$ and the characteristic models of $h$. We have seen that for a matroid Horn function $h$, these two sets coincide. However, the same may happen even for a hypergraph Horn function $h$ that is not matroid Horn. Note that if we do not insist on $h$ being hypergraph Horn, then finding such an example is simple. Namely, let $h'$ be an arbitrary definite Horn function, and consider $\cM=\cM(h')$. Now define $\cT$ as the intersection closure $\cM(h')$. By \cite{McKinsey1943}, there exists a unique definite Horn function $h$ such that $\cT$ is its set of true sets. Then, by our construction, $\cM=\cM(h)$ is the set of characteristic models of $h$. The next example shows that analogous examples can exist among hypergraph Horn functions too.

\begin{ex}\label{e-nonmatroidal-M-char}
Let $V=\{0,1,2,3,4,5\}$, $E=\{\{i,(i+1\bmod 5)\}\mid i=0,1,\dots,4\}\cup \{\{i,5\}\mid i=0,1,\dots,4\}$, and $G=(V,E)$. We view the edges of $G$ as subsets of $V$ of size two. Note that $G$ has five triangles, and we use those to define the hypergraph $\cH=\{\{i,(i+1 \bmod 5),5\}\mid i=0,1,\dots,4\}$. Finally, let $h=\Phi_{\cH}$ be the hypergraph Horn function associated to $\cH$. 

It is not difficult to check that the minimal keys of $h$ are exactly the edges of $G$. First we show that $h$ is a not matroidal. To see this, it suffices to show that $\cK(h)$ violates the basis exchange axioms. For instance, edges $K_1=\{0,1\}$ and $K_2=\{2,3\}$ are members of $\cK(h)$, but vertex $v=1\in K_1$ cannot be exchanged to any of the vertices of $K_2$, since neither $\{0,2\}$ nor $\{0,3\}$ are edges of $G$. 

Now we show that $\cM(h)$ is the set of characteristic models of $h$. As $\cK(h)=E$, we have $\FC_h(S)=V$ for any subset $S\subseteq V$ that contains an edge. This implies that nontrivial closed sets cannot contain edges of $G$, hence the nontrivial true sets of $h$ are the nonempty independent sets of $G$. An easy computation shows that $\cM(h)=\cK(h)^{dc}$ is the set of maximal independent sets of $G$, i.e., we have $\cM(h)=\{ 5 \} \cup \{\{i,(i+2 \bmod 5)\}\mid i=0,1,\dots,4\}$. From this, one can derive $\cM^{\cap}(h)=\cM(h)\cup \{\{i\}\mid i=0,1,\dots,4\}\cup \{\emptyset\}$. These together imply that $\cM(h)$ is the set of characteristic models of $h$, since $\cM^{\cap}(h)$ is exactly the family of independents sets of $G$. 
\end{ex}

Our next example shows that matroid Horn and non-matroidal hypergraph Horn functions may have the same set of minimal keys, while by Theorem~\ref{t-matroidal-formulaic-equivalences} a hypergraph can be the set of minimal keys of at most one matroid Horn function. 

\begin{ex}
Let $V=X\cup Y$ where $X=\{1,2,3,4\}$ and $Y=\{5,6,7,8\}$. Furthermore, define $E=\{\{1,3\},\{1,4\},\{2,3\},\{2,4\}\}$ and $F=\{\{5,7\},\{5,8\},\{6,7\},\allowbreak\{6,8\}\}$. Consider the Sperner family $\cH=\{\{1,2,3,4\},\{3,4,5,6\},\{5,6,7,8\}\}$. It is not difficult to check that 
\begin{equation*}
\cK(\Phi_{\cH}) = \left\{ K\subseteq V  \bigm| |K\cap X|=3 ~\text{and}~ K\cap Y\in F\right\}\cup
\left\{ K\subseteq V  \bigm| |K\cap Y|=3 ~\text{and}~ K\cap X\in E\right\}.
\end{equation*}
From this, one can derive that 
\[
\cC  =  \cK(\Phi_{\cH})^{cd}  =  \{X,Y\} \cup \left\{\{a,b\},\{c,d\} \right\} \times \left\{\{e,f\},\{g,h\}\right\}
\]
and that it satisfies the circuit axioms \ref{cax1}--\ref{cax3}. Thus, by Table~\ref{tab:table1}, we have $\cK(\Phi_{\cC}) = \cC^{dc} =  \left( \cK(\Phi_{\cH})^{cd}\right)^{dc} = \cK(\Phi_{\cH})$, since both operators ``$c$" and ``$d$" are involutions over the set of Sperner hypergraphs. However, $\{1,2,3,5\}{\rightarrow} 6$ is a prime implicate of $\Phi_\cH$ while $\{1,2,5,6\}\overset{\Phi_{\cH}}{\nrightarrow}3$, and hence $\Phi_\cH$ is not matroid Horn by Theorem~\ref{t-matroidal-functional-equivalences}.
\end{ex}

\section{Minimum representations}
\label{sec:representation}

Given a system $\cF\subseteq 2^V$ and a subsystem $\cG\subseteq \cF$, let $\langle\cG\rangle^1_\cF$ denote the family of sets in $\cF$ that $\cG$ \emph{generates in a single step}, that is,
\begin{equation*}
\langle\cG\rangle^1_\cF=\cG\cup\{X\in\cF\mid X=(X_1\cup X_2)-v\ \text{for distinct}\ X_1,X_2\in\cG,\ v\in X_1\cap X_2\}.
\end{equation*}
We denote the repeated application of operator $\langle\ \cdot\ \rangle^1_\cF$ for $k\geq 2$ times by $\langle\ \cdot\ \rangle^k_\cF$, that is,
\begin{equation*}
\langle\cG\rangle^k_\cF=\langle\langle\cG\rangle^{k-1}_\cF\rangle^1_\cF.
\end{equation*}
Since $V$ is finite, for some $k$ we have $\langle\cG\rangle^k_\cF=\langle\cG\rangle^{k+1}_\cF$. We denote this final system by $\langle\cG\rangle_\cF$ and call $\cG$ a \emph{generator} of $\cF$ if $\langle\cG\rangle_\cF=\cF$.

Let $\bM=(V,\cC)$ be a matroid and $h_\bM$ be the corresponding matroid Horn function. For a circuit $C\in\cC$ and $v\in C$, the clause $(C-v)\rightarrow v$ is called a \emph{circuit clause}. Our goal is to find compact representations of $h_\bM$, and therefore of $\bM$ as well. To this end, we consider three different objectives:
\begin{description}\itemsep0em
\item[\textbf{(G) circuit generator:}] $|\bM|_G=$ minimum cardinality of a generator of $\cC$, 
\item[\textbf{(C) number of circuits:}]  $|\bM|_C=$ minimum cardinality of a subsystem $\cD\subseteq \cC$ s.t. $h_\bM=\Phi_{\cD}$,
\item[\textbf{(K) number of circuit clauses:}] $|\bM|_{K}=$ minimum number of circuit clauses needed to represent $h_\bM$.
\end{description}
Objective (G) characterizes the complexity of the circuit family of $\bM$. The number of circuits (C) denotes the minimum size of a subsystem of $\cC$ for which the corresponding hypergraph Horn CNF provides a representation of $h_\bM$. The number of clauses is an important parameter for SAT solvers when a Horn formula encodes a constraint which is part of a larger problem; objective (K) captures an analogous notion when the set of usable clauses is restricted to circuit clauses.

\subsection{Binary matroids}
\label{sec:binary}

A matroid $\bM=(V,\cC)$ is called \emph{binary} if it can be represented over the finite field GF(2), or in other words, its elements corresponds to the columns of a $0,1$-matrix such that a set $X\subseteq V$ is independent in $\bM$ if and only if the corresponding columns are linearly independent over GF(2). By abuse of notation, we denote the binary matrix representing the matroid also by $\bM$, and for a subset $X\subseteq V$, the sum of the corresponding columns of $\bM$ is written as $\sum_{u\in X} u$. The class of binary matroids contains several important subclasses, such as regular or graphic matroids.

We will need the following easy observations on binary matroids.

\begin{lemma}\label{lem:binary}
Let $\bM=(V,\cC)$ be a binary matroid and $X\subseteq V$ be an independent set. Then there is at most one $v\in V$ for which $X+v$ forms a circuit of $\bM$. 
\end{lemma}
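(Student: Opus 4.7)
The plan is to exploit the linear-algebraic characterization of circuits in a binary matroid. Recall that a subset of $V$ is dependent in $\bM$ precisely when the corresponding columns are linearly dependent over $\mathrm{GF}(2)$. Since the only nonzero scalar in $\mathrm{GF}(2)$ is $1$, every such dependency has the form $\sum_{u\in D} u = 0$ for some nonempty $D$. For a circuit $C$, the minimality of $C$ as a dependent set forces $D=C$, since otherwise $D\subsetneq C$ would itself be dependent, contradicting the minimality of $C$. Hence every circuit $C$ of $\bM$ satisfies the identity $\sum_{u\in C} u = 0$ in $\mathrm{GF}(2)$.

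Applying this observation to the hypothesis that $X+v$ is a circuit yields $v + \sum_{u\in X} u = 0$, i.e. $v = \sum_{u\in X} u$ as a column of $\bM$. Crucially, the right-hand side depends only on $X$. Therefore, if a second element $v' \in V$ also satisfies that $X + v'$ is a circuit, the same computation gives $v' = \sum_{u\in X} u$, and comparing the two we obtain $v = v'$ as columns, which under the paper's identification of elements with columns means $v = v'$ as elements of $V$.

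I expect no significant obstacle: the argument is essentially immediate once the $\mathrm{GF}(2)$-structure of binary circuits is invoked. The only point requiring mild care is the identification of $V$ with the column set of the binary matrix $\bM$, which the paper has already set up via its abuse of notation ``the sum of the corresponding columns of $\bM$ is written as $\sum_{u\in X} u$''. The single substantive step to spell out in the write-up is the claim that a circuit's minimal $\mathrm{GF}(2)$-dependency is the all-ones combination, and this is a one-line consequence of the field having only the scalars $0$ and $1$.
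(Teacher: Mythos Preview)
Your argument is correct and rests on the same observation as the paper's proof: over $\mathrm{GF}(2)$ any $v$ making $X+v$ a circuit must equal the column sum $\sum_{u\in X}u$. The paper phrases this by first setting $v=\sum_{u\in X}u$ and then verifying that $X+v$ is indeed a circuit, leaving the ``at most one'' direction implicit, whereas you argue uniqueness directly; the underlying idea is identical.
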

\begin{proof}
Let $v=\sum_{u\in X}u$. We claim that if $v$ is contained in $V$, then it is the unique element for which $X+v$ forms a circuit. The set $X+v$ is clearly dependent, but any proper subset $X'\subsetneq X+v$ is independent. Indeed, if $X'\subsetneq X+v$ is dependent, then necessarily $v\in X'$. But then $(X+v)\setminus X'\subseteq X$ is a non-empty dependent subset of $X$, a contradiction.
\end{proof}

\begin{lemma}\label{lem:binary2}
Let $\bM=(V,\cC)$ be a binary matroid without parallel elements. If $C_1,C_2\in\cC$ are such that $|C_1\setminus C_2|=1$, then $|C_1|<|C_2|$.
\end{lemma}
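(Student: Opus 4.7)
The plan is to exploit the key algebraic fact about binary matroids: the columns of any circuit sum to zero over $\mathrm{GF}(2)$. Concretely, I would write $\sum_{u\in C}u=0$ for every circuit $C\in\cC$, working with the columns of the binary matrix $\bM$.

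First, let $v$ be the unique element of $C_1\setminus C_2$, so that $C_1\cap C_2=C_1-v$. Setting $Y=C_2\setminus C_1$, I would use $\sum_{u\in C_2}u=0$ together with the decomposition $C_2=(C_1-v)\sqcup Y$ to obtain $\sum_{u\in Y}u=\sum_{u\in C_1-v}u$. Then, since $\sum_{u\in C_1}u=0$, the right-hand side equals $v$. Thus $\sum_{u\in Y}u=v$ as column vectors.

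Next I would rule out the cases $|Y|\le 1$ using the no-parallel-elements assumption. If $Y=\emptyset$, then $v=0$ as a column, which means $\{v\}$ is a circuit (a loop), making $v$ parallel to itself; this contradicts the hypothesis. If $Y=\{w\}$ for a single element $w\neq v$, then $w=v$ as columns, so $\{v,w\}$ is a circuit and $v,w$ are parallel, again a contradiction.

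Consequently $|Y|\ge 2$, and the computation $|C_2|=|C_1\cap C_2|+|Y|=(|C_1|-1)+|Y|\ge |C_1|+1$ gives the desired strict inequality $|C_1|<|C_2|$. I do not anticipate a real obstacle here; the only subtle point is being precise about what ``no parallel elements'' excludes (both loops and two-element circuits), which is exactly what the two edge cases $|Y|=0$ and $|Y|=1$ require.
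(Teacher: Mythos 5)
Your proof is correct, and it takes a somewhat more direct route than the paper's. The paper first notes $|C_1|\le|C_2|$ (since $C_2\not\subseteq C_1$) and then rules out equality by contradiction: if $|C_2\setminus C_1|=1$ too, then the independent set $X=C_1\cap C_2$ would extend to a circuit by two distinct elements $v$ and $w$, contradicting Lemma~\ref{lem:binary}, whose proof identifies the unique such extension as $\sum_{u\in X}u$. You instead invoke the standard fact that the columns of a circuit of a binary matroid sum to zero over GF(2), compute directly that $\sum_{u\in C_2\setminus C_1}u$ equals the column of $v$, and conclude $|C_2\setminus C_1|\ge 2$ outright; the underlying linear algebra is the same, but your version bypasses Lemma~\ref{lem:binary}, avoids the indirect argument, and yields the slightly sharper conclusion $|C_2\setminus C_1|\ge 2$. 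Two small quibbles. First, the case $Y=\emptyset$ is excluded more cleanly by circuit axiom \ref{cax2} (it would force $C_2\subsetneq C_1$); calling a loop ``parallel to itself'' is non-standard, as parallel elements are usually defined to be non-loops forming a two-element circuit. Second, in the case $|Y|=1$ you implicitly need the column of $v$ to be nonzero to conclude that $\{v,w\}$ is a circuit; this holds whenever $|C_1|\ge 2$, and the residual degenerate situation (two loops) is an edge case that the paper's own proof also glosses over and that vanishes once the matroid is assumed simple, as it is wherever the lemma is applied.
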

\begin{proof}
As $C_2\nsubseteq C_1$ and $|C_1\setminus C_2|=1$, we have $|C_1|\leq |C_2|$. Suppose indirectly that $|C_1|=|C_2|$, implying that $|C_2\setminus C_1|=1$. Let $C_1\setminus C_2=\{v\}$ and $C_2\setminus C_1=\{w\}$. The intersection $X=C_1\cap C_2$ is an independent set of $\bM$ for which both $X+v$ and $X+w$ are circuits. By Lemma~\ref{lem:binary}, $v=w=\sum_{u\in X} u$, contradicting the assumption that $\bM$ does not contain parallel elements. 
\end{proof}

Motivated by the example of graphic matroids, we call a circuit $C\in\cC$ of a simple binary matroid \emph{chordless} if there exists no $C'\in\cC$ such that $|C'\setminus C|=1$ and $|C'|<|C|$. Our first result characterizes the minimum size of a generator of $\cC$, i.e. objective (G), for simple binary matroids.

\begin{theorem} \label{thm:binary_g}
Let $\bM=(V,\cC)$ be a simple binary matroid. Then the set of chordless cycles is the unique minimum generator of $\cC$.
\end{theorem}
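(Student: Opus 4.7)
I will establish the theorem by proving two complementary statements: (i) the family $\cC_0$ of chordless circuits generates $\cC$, and (ii) every generator of $\cC$ must contain each chordless circuit. Together these give both minimality and uniqueness.

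For (i), I argue by induction on $|C|$ that every $C \in \cC$ lies in $\langle \cC_0 \rangle_\cC$. If $C$ is chordless, there is nothing to prove. Otherwise, by definition there exists $C' \in \cC$ with $|C'| < |C|$ and $|C' \setminus C| = 1$; write $C' \setminus C = \{v\}$, so $v \notin C$. In a binary matroid the symmetric difference of two circuits is a disjoint union of circuits, hence $C \triangle C' = (C \setminus C') + v$ decomposes as disjoint circuits $D_1, \ldots, D_k$. Because $C$ is a circuit, every proper subset of $C$ is independent, so any $D_j$ disjoint from $v$ would lie inside $C$ and be independent, a contradiction; this forces $k = 1$ and $D_1 = (C \setminus C') + v$. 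One then verifies that $D_1 \cap C' = \{v\}$, that $(D_1 \cup C') - v = C$, and that $|D_1| = |C| - |C'| + 2 < |C|$ thanks to $|C'| \geq 3$ from simplicity. The inductive hypothesis places $D_1$ and $C'$ in $\langle \cC_0 \rangle_\cC$, and one generation step then places $C$ there as well.

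For (ii), suppose a chordless $C$ satisfies $C = (C_1 \cup C_2) - v$ for some distinct $C_1, C_2 \in \cC$ with $v \in C_1 \cap C_2$. Then $v \notin C$, $C_1 \cup C_2 = C + v$, and each $C_i - v \subseteq C$, giving $|C_i| \leq |C|$ and $C_i \setminus C = \{v\}$. If $|C_1| = |C_2| = |C|$, then $|C_1 \cap C_2| = |C| - 1$ and therefore $|C_1 \setminus C_2| = 1$, which contradicts Lemma~\ref{lem:binary2}. Otherwise some $|C_i| < |C|$; but then $|C_i \setminus C| = 1$ combined with $|C_i| < |C|$ contradicts $C$ being chordless. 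Consequently no chordless circuit can be produced by any single generation step, so any generator of $\cC$ must contain every chordless circuit. Combined with (i), this shows $\cC_0$ is the unique minimum generator. The principal obstacle is part (i): guaranteeing that the binary symmetric-difference decomposition collapses to a single circuit, which is the step that genuinely uses the interaction between the circuit structure of $C$ and the binary-matroid property, rather than pure matroid axioms.
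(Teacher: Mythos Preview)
Your proof is correct. The overall architecture matches the paper's---show that chordless circuits generate, and that nothing generated in one step can be chordless---but your execution of part~(i) is genuinely different. The paper finds the second circuit via the elimination axiom~\ref{cax3}: given the chord $C_1$ with $C_1\setminus C=\{v\}$, it picks $w\in C\cap C_1$ and takes $C_2\subseteq (C\cup C_1)-w$, then invokes Lemma~\ref{lem:binary2} to get $|C_2|<|C|$. You instead use the binary-specific fact that $C\triangle C'$ is a disjoint union of circuits, argue that only one component can contain $v$, and obtain $D_1=(C\setminus C')+v$ directly. Your route has the advantage that the identity $(D_1\cup C')-v=C$ is immediate from the construction, whereas in the paper's argument one must still check that the particular $C_2$ coming from~\ref{cax3} satisfies $(C_1\cup C_2)-v=C$. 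For part~(ii), your case split is more work than necessary: since $C_i\setminus C=\{v\}$, Lemma~\ref{lem:binary2} gives $|C_i|<|C|$ outright, which is exactly the paper's one-line argument and makes your Case~1 vacuous. Also, the step ``$C_i-v\subseteq C$, giving $|C_i|\le|C|$'' literally only yields $|C_i|\le|C|+1$; equality is excluded because $C_i=C+v$ would violate~\ref{cax2}, so the conclusion stands but you should say so.
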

\begin{proof}
Let $C_1,C_2\in C$ and $v\in C_1\cap C_2$. If $C=(C_1\cup C_2)-v$ is a circuit of $\bM$, then $|C_1\setminus C|=|\{v\}|=1$ and so $|C_1|<|C|$ by Lemma~\ref{lem:binary2}. That is, every circuit obtained by the generation step is non-chordless, hence all the chordless circuits must be contained in any generator of $\bM$.

Let us denote by $\cD$ the set of chordless cycles of $\bM$, and take an arbitrary circuit $C\in\cC$. We prove by induction on the size of $|C|$ that it can be generated from $\cD $, that is, $C\in\langle\cD \rangle_\cC$. This clearly holds for chordless circuits. Assume now that $C\in\cC$ is not chordless. By definition, there exists $C_1\in\cC$ such that $|C_1\setminus C|=1$ and $|C_1|<|C|$. Let $C_1\setminus C=\{v\}$ and take an arbitrary element $w\in C\cap C_1$. Notice that such an element exists as $|C_1|>1$ by the simplicity of $\bM$. By the third circuit axiom \ref{cax3}, there exists a circuit $C_2\subseteq (C\cup C_1)-w$. As $C_2\nsubseteq C$, we have $v\in C_2$ and so $|C_2\setminus C|=1$. Therefore $|C_2|<|C|$ by Lemma~\ref{lem:binary2}. By the induction hypothesis, both $C_1$ and $C_2$ can be generated from $\cD $, implying that $C$ can be generated as well. 
\end{proof}

It turns out that the set of chordless circuits provides an optimal representation in terms of $|\cdot|_C$ and $|\cdot|_K$ as well. The proof is analogous to that of Theorem~\ref{thm:binary_g}.

\begin{theorem} \label{thm:binary_ck}
Let $\bM=(V,\cC)$ be a simple binary matroid. Then the set of chordless cycles is the unique minimum subsystem of $\cC$ with respect to $|\bM|_C$. Furthermore, $\{(C-v)\rightarrow v\mid C\in\cC\ \text{is chordless},\ v\in C\}$ is the unique minimum set of circuit clauses with respect to $|\bM|_K$.
\end{theorem}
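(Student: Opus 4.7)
The plan is to follow the template of Theorem~\ref{thm:binary_g}, splitting into a necessity direction (every chordless circuit clause lies in any representation) and a sufficiency direction (the chordless circuit clauses alone represent $h_\bM$). I will first handle the clause-counting variant $|\bM|_K$, and then read off the circuit-counting variant $|\bM|_C$ as a corollary.

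For necessity, let $\Psi$ be any set of circuit clauses with $\bigwedge \Psi = h_\bM$, fix a chordless circuit $C$ and $v\in C$. Since $(C-v)\to v$ is an implicate of $h_\bM$, Lemma~\ref{l-E-implicate} gives $v\in \FC_{\Phi_\Psi}(C-v)$, so forward chaining from $C-v$ must eventually add $v$. I will argue that the only option in the first step is to fire the clause $(C-v)\to v$ itself. Indeed, suppose $u\notin C-v$ were added via some clause $(D-u)\to u \in \Psi$ with $D\subseteq (C-v)+u$ and $u\in D$. If $u=v$, then $D\subseteq C$ and $v\in D$, so by circuit minimality $D=C$ and the clause used is $(C-v)\to v$. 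If $u\neq v$, then $u\notin C$, so $|D\setminus C|=1$; Lemma~\ref{lem:binary2} rules out $|D|=|C|$ (otherwise $|C\setminus D|=1$ would give $|C|<|D|$, contradiction, and a simple binary matroid has no parallel elements to allow equality), so $|D|<|C|$, contradicting chordlessness of $C$. Since $C-v$ remains unchanged if the first step cannot fire, the same obstruction recurs, so $v$ never enters the closure unless $(C-v)\to v \in \Psi$.

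For sufficiency, I will show that $\Psi_0 = \{(C-v)\to v \mid C\in\cC \text{ chordless}, v\in C\}$ already represents $h_\bM$. By Theorem~\ref{thm:binary_g}, every circuit of $\bM$ is obtained from chordless circuits by iterated applications of the generation step $(C_1,C_2,v)\mapsto (C_1\cup C_2)-v$. It suffices to verify that such a step preserves the function represented, i.e.\ if $C_1,C_2$ contribute circuit clauses to a CNF and $C_3=(C_1\cup C_2)-v$ is a circuit with $v\in C_1\cap C_2$, then each clause $(C_3-w)\to w$ is derivable. The key binary-matroid observation is that $C_1\triangle C_2$ is a disjoint union of circuits, so if the superset $C_3\supseteq C_1\triangle C_2$ is itself a circuit, then $C_1\cap C_2=\{v\}$; therefore $w\in C_1\triangle C_2$, say $w\in C_1\setminus C_2$. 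Then $C_2-v\subseteq C_3-w$, so firing $(C_2-v)\to v$ adds $v$, after which $C_1-w\subseteq (C_3-w)+v$ and firing $(C_1-w)\to w$ produces $w$. Uniqueness in both senses is then automatic: the necessity argument shows every chordless circuit clause (resp.\ every chordless circuit) is contained in every valid $\Psi$ (resp.\ $\cD$), while $\Psi_0$ achieves equality. The statement for $|\bM|_C$ follows because a subsystem $\cD\subseteq \cC$ with $\Phi_\cD = h_\bM$ must contain every chordless $C$ (else none of its clauses lie in $\Phi_\cD$, contradicting the necessity argument), and the chordless circuits suffice by the same generation argument. The main obstacle is the forward-chaining analysis in necessity, where one must rule out derivations via smaller or equal-size circuits; this is exactly where binarity, simplicity, and chordlessness are jointly invoked through Lemmas~\ref{lem:binary} and \ref{lem:binary2}.
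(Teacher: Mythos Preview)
Your proof is correct. The necessity direction is essentially identical to the paper's: both argue that the only clause that can fire from $C-v$ for a chordless $C$ is $(C-v)\to v$ itself, using Lemma~\ref{lem:binary2} to exclude any other circuit $D$ with $|D\setminus C|=1$.

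The sufficiency direction, however, takes a genuinely different route from the paper. The paper argues by direct induction on $|C|$: given a non-chordless $C$, it picks a chord $C_1$ with $|C_1\setminus C|=1$ and $|C_1|<|C|$, then applies circuit elimination to obtain a second circuit $C_2$ with $|C_2|<|C|$, and derives $(C-u)\to u$ from the clauses of $C_1$ and $C_2$ via the induction hypothesis. You instead bootstrap from Theorem~\ref{thm:binary_g}: since chordless circuits generate all of $\cC$, it suffices to show that one generation step $(C_1,C_2,v)\mapsto C_3=(C_1\cup C_2)-v$ can be simulated by forward chaining. Your key extra ingredient is the standard binary-matroid fact that $C_1\triangle C_2$ is a disjoint union of circuits; since $C_3\supseteq C_1\triangle C_2$ is itself a circuit, this forces $C_1\cap C_2=\{v\}$, which makes the two-clause derivation of $(C_3-w)\to w$ completely transparent. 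Your approach buys a cleaner reuse of Theorem~\ref{thm:binary_g} and avoids re-running an induction on circuit size, at the modest cost of invoking the symmetric-difference property (which is not stated in the paper but is classical). The paper's approach is more self-contained within the lemmas already proved.

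One minor remark: your parenthetical justification around Lemma~\ref{lem:binary2} in the necessity step is more elaborate than needed. The lemma directly gives $|D\setminus C|=1\Rightarrow |D|<|C|$; you do not need to argue about $|C\setminus D|$ separately.
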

\begin{proof}
Let us denote by $\cD$ the set of chordless cycles of $\bM$. Take a chordless circuit $C\in\cD $, and consider an arbitrary $u\in C$. Then $(C-u)\rightarrow u$ is an implicate of $h_\bM$, hence for any representation of $h_\bM$ there exists a circuit $C'$ and $v\in C'$ such that $C'-v\subseteq C-u$. If $v\in C$ then necessarily $C=C'$ by circuit axiom \ref{cax2}. Otherwise $|C'\setminus C|=1$, and so $|C'|<|C|$ by Lemma~\ref{lem:binary2}, contradicting $C$ being chordless. This implies that every representation of $h_\bM$ contains the circuit clause $(C-u)\rightarrow u$.

We claim that $h_\bM=\Phi_{\cD }$. To see this, take an arbitrary circuit $C\in\cC$ and $u\in C$. We prove by induction on the size of $|C|$ that $(C-u)\rightarrow u$ is an implicate of $\Phi_{\cD }$. This clearly holds for chordless circuits. Assume now that $C\in\cC$ is not chordless. By definition, there exists $C_1\in\cC$ such that $|C_1\setminus C|=1$ and $|C_1|<|C|$. Let $C_1\setminus C=\{v\}$ and take an arbitrary element $w\in C\cap C_1$. Notice that such an element exists as $|C_1|>1$ by the simplicity of $\bM$. By the third circuit axiom \ref{cax3}, there exists a circuit $C_2\subseteq (C\cup C_1)-w$. As $C_2\nsubseteq C$, we have $v\in C_2$ and so $|C_2\setminus C|=1$. Therefore $|C_2|<|C|$ by Lemma~\ref{lem:binary2}. By the induction hypothesis, $(C_1-v)\rightarrow v$ and $(C_2-u)\rightarrow u$ are both implicates of $\Phi_{\cD}$,  implying that $(C-u)\rightarrow u$ is also an implicate. 
\end{proof}

\begin{remark}
The gap between the number of circuits and the number of chordless circuits can be exponentially large. For example, consider a complete bipartite graph $G=(S,T;E)$ with $|S|=|T|=n$, and let $\bM=(E,\cC)$ be the graphic matroid of $G$. Then the number of cycles in $G$ is clearly exponential in $n$, while the number of chordless cycles is ${n \choose 2}^2$.
\end{remark}

\subsection{Uniform matroids}
\label{sec:uniform}

Given a positive integer $r$, the \emph{uniform matroid} $\bM=(V,\cC)$ of rank $r$ is defined by $\cC=\{C\subseteq V\mid |C|=r+1\}$. Although uniform matroids form one of the simplest matroid classes, determining the exact value of $|\bM|_C$ seems to be a difficult problem. For $|\bM|_G$ and $|\bM|_K$ we give closed formulas for the case of uniform matroids. It is worth mentioning that $h_\bM$ in this case is a key Horn function~\cite{berczi2022approximating} for which $F_{h_\bM}(X)=X$ if $|X|\leq r-2$ and $F_{h_\bM}(X)=V$ otherwise.

\begin{theorem} \label{thm:uniform_g}
Let $\bM=(V,\cC)$ be a rank-$r$ uniform matroid. Then $|\bM|_G=n-r$.
\end{theorem}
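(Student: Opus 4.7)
The plan is to establish the two matching bounds $|\bM|_G \leq n-r$ (by exhibiting an explicit generator of that size) and $|\bM|_G \geq n-r$ (by a Hall-type / matching argument on the bipartite incidence graph).

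For the upper bound, I would fix any basis $B \subseteq V$ of $\bM$ (so $|B|=r$) and take $\cG = \{B+v \mid v \in V \setminus B\}$, which is a family of $n-r$ circuits. I would then prove $\langle\cG\rangle_\cC = \cC$ by induction on $k = |C \setminus B|$ for a circuit $C \in \cC$. The base case $k=1$ is immediate. For $k \geq 2$, write $C = S \cup \{v_{i_1},\dots,v_{i_k}\}$ with $S = C \cap B$ (so $|S|=r+1-k \leq r-1$) and pick any $b \in B \setminus S$. The circuits
\[
C_1 = (S+b) \cup \{v_{i_1},\dots,v_{i_{k-1}}\}, \qquad C_2 = (S+b) \cup \{v_{i_1},\dots,v_{i_{k-2}},v_{i_k}\},
\]
each have $k-1$ elements outside $B$ and therefore lie in $\langle\cG\rangle_\cC$ by the inductive hypothesis; their intersection $(S+b) \cup \{v_{i_1},\dots,v_{i_{k-2}}\}$ has exactly $r$ elements, and the generation step applied with $v = b \in C_1 \cap C_2$ yields $(C_1 \cup C_2)-b = C$.

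For the lower bound, let $\cG$ be any generator and consider the bipartite incidence graph $\cB = (V,\cG;E)$ with $(v,C) \in E$ iff $v \in C$. If $\cB$ admits a matching of size $n-r$, then $|\cG| \geq n-r$. By the defect form of Hall's theorem, such a matching exists iff $|N_\cB(S)| \geq |S|-r$ for every $S \subseteq V$, where $N_\cB(S) = \{C \in \cG \mid C \cap S \neq \emptyset\}$. The plan is therefore to verify this Hall-type inequality for every generator. The starting observation is that the generation step $(X_1,X_2,v) \mapsto (X_1 \cup X_2)-v$ never enlarges $\bigcup \cG$, so $U(\cG) := \bigcup_{C \in \cG} C$ must equal $V$; in particular, every $v \in S$ already lies in some $C \in N_\cB(S)$, giving the weak bound $|N_\cB(S)| \geq |S|/(r+1)$.

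The hard part, and the one I expect to be the main obstacle, is strengthening this to $|N_\cB(S)| \geq |S|-r$. I would argue by contradiction: assume $|N_\cB(S)| \leq |S|-r-1$ for some $S$ with $|S| \geq r+1$, and trace how circuits of $\binom{S}{r+1}$ can possibly be generated. Since circuits in $\cG \cap \binom{V \setminus S}{r+1}$ contribute no element of $S$ to any derivation, every element of $S$ appearing in an intermediate or final circuit must originate from $N_\cB(S)$. The delicate step is ruling out derivations that ``detour'' through auxiliary elements of $V \setminus S$: an operation can temporarily carry a circuit outside $\binom{S}{r+1}$ and later eliminate the auxiliary element by a second operation removing it. The goal would be to show that each such detour still consumes a fresh circuit from $N_\cB(S)$ per additional element of $S$ that must be ``reached,'' forcing $|N_\cB(S)| \geq |S|-r$, contradicting the assumption and completing the proof.
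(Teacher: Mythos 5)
Your upper bound is correct and complete, and it takes a slightly different route from the paper: you generate from the $n-r$ fundamental circuits $\{B+v \mid v\in V\setminus B\}$ of a fixed basis $B$ and induct on $|C\setminus B|$, whereas the paper uses the $n-r$ consecutive intervals $\{v_i,\dots,v_{i+r}\}$ and a two-parameter induction on the interval structure of $C$. Your version is arguably cleaner (the inductive step producing $C_1,C_2$ with $|C_i\setminus B|=k-1$ and $(C_1\cup C_2)-b=C$ checks out), and either construction does the job.

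The lower bound, however, is where the proposal has a genuine gap, and you have flagged it yourself. Reducing to the deficiency form of Hall's theorem does not actually reduce anything: the inequality $|N_\cB(S)|\geq |S|-r$ you need to verify for \emph{all} $S\subseteq V$ contains the theorem itself as the special case $S=V$ (there $|N_\cB(V)|=|\cG|$ and the inequality reads $|\cG|\geq n-r$). So the matching machinery is superfluous -- if you could prove the Hall condition you would simply take $S=V$ -- and what remains unproved is precisely the original statement. Your sketch of how to close it (``each detour still consumes a fresh circuit from $N_\cB(S)$'') is the entire difficulty, and the covering bound $|N_\cB(S)|\geq |S|/(r+1)$ you do establish falls far short once $|S|\geq r+3$. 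The idea the paper uses, and which is absent from your plan, is a decomposition of the generator itself rather than of derivations: declare $C_1,C_2\in\cG$ connected if $|C_1\setminus C_2|=|C_2\setminus C_1|=1$, observe that a connected component with $q_i$ circuits spans at most $r+q_i$ elements of $V$ (each new connected circuit adds at most one new element), and that the generation step can only combine circuits sharing at least $r$ elements, forcing each component's span to meet the union of the others in at least $r$ elements. Summing over components and using that $\bigcup\cG=V$ gives $n\leq r+\sum_i q_i=r+|\cG|$. I would suggest replacing the Hall-type framework with this component-counting argument (or completing your derivation-tracing argument, which in effect would have to rediscover it).
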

\begin{proof}
Let $v_1,\dots,v_n$ denote the elements of the ground set $V$. Consider the system $\cD =\{\{v_i,\dots,v_{i+r}\}\mid i\in[n-r]\}$. We claim that $\cD $ is a generator of $\cC$. Take an arbitrary $C\in\cC$, and assume that $C=\bigcup_{j=1}^q\{v_{i_j},\dots,v_{i_j+\delta_j}\}$, where $i_j+\delta_j<i_{j+1}$ for $j\in[q-1]$. We prove by induction on $\delta_1$, and with respect to this, on $q$ that $C$ can be generated from $\cD $. If $\delta_1=0$ then $q=1$ and $C\in\cD $, hence assume that $\delta_1>0$. Let $C_1=C-v_{i_q+\delta_q}+v_{i_1+\delta_1+1}$ and $C_2=C-v_1+v_{i_1+\delta_1+1}$. It is not difficult to see that either $\delta_1$ or $q$ decreases in both cases, hence the circuits $C_1$ and $C_2$ can be generated starting from $\cD $. Notice that $C=(C_1\cup C_2)-v_{i_1+\delta_1+1}$, therefore $C$ can be generated as well.

To see that $\cD $ is an optimal solution, consider any generator $\cG$. Let us call a pair $C_1,C_2\in\cG$ connected if $|C_1\setminus C_2|=|C_2\setminus C_1|=1$. Accordingly, one can define the connected components $\cG_1,\dots,\cG_s$ of $\cG$. We denote the number of circuits in $\cG_i$ by $q_i$. By the definition of connectivity, the union of the circuits in $\cG_i$ contains at most $r+q_i$ elements of $V$. Moreover, $|\bigcup_{C\in\cG_i}C\cap(\bigcup_{C\in\cG\setminus\cG_i} C)|\geq r$ for $i\in[p]$. Indeed, if this does not hold then the generation step cannot be applied to $C_1\in\langle\cG_i\rangle_\cC$ and $C_2\in\langle\cG\setminus\cG_i\rangle_\cC$ as $|C_1\cap C_2|\leq r-1$. This implies that no circuit $C$ with $|C\cap\bigcup_{C\in\cG\setminus\cG_i}|=r$ can be generated, contradicting $\cG$ being a generator. As the circuits in $\cG$ must cover all the $n$ elements of $V$, we get
\begin{align*}
n
{}&{}\leq 
\left|\bigcup_{C\in\cG_1}C\right|+\sum_{i=2}^s\left|\left(\bigcup_{C\in\cG_i}C\right)\setminus \left(\bigcup_{C\in\cG\setminus\cG_i} C\right)\right|\\
{}&{}\leq
(r+q_1)+\sum_{i=2}^s [r+q_i-r]\\
{}&{}=
r+\sum_{i=1}^s q_i.
\end{align*}
This implies $|\cG|=\sum_{i=1}^s|\cG_i|=\sum_{i=1}^s q_i\geq n-r$, concluding the proof of the theorem.
\end{proof}

Let us now turn to the case of $|\cdot|_C$. We give two results, the first being based on a counting argument, while the second showing a connection to Tur\'an systems.

\begin{theorem} \label{thm:uniform_c_1}
Let $\bM=(V,\cC)$ be a rank-$r$ uniform matroid on $n\geq r+2$ elements. Then ${n\choose r}/(r+\frac{1}{2})\leq |\bM|_C\leq {n\choose r}$.
\end{theorem}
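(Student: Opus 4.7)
The plan is to prove the two inequalities separately, and for the upper bound I would fix an arbitrary element $v_1 \in V$ and take $\cD_1 = \{C \in \cC : v_1 \in C\}$, so that $|\cD_1| = \binom{n-1}{r} \leq \binom{n}{r}$. To check that $\Phi_{\cD_1} = h_\bM$, recall that the true sets of $h_\bM$ are precisely the subsets of size at most $r-1$ together with $V$ itself, while a set $T$ is a true set of $\Phi_{\cD_1}$ iff no $C \in \cD_1$ has $|C \setminus T| = 1$. For $|T| \leq r-1$ this condition is automatic since $|C|=r+1$, and for every $T$ with $r \leq |T| \leq n-1$ a brief case split on whether $v_1 \in T$ yields an explicit $C \in \cD_1$ with $|C \setminus T| = 1$ (e.g., if $v_1 \notin T$, take any $r$-subset $T' \subseteq T$ and set $C = T' + v_1$).

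For the lower bound I would let $\cD \subseteq \cC$ be any subsystem with $\Phi_\cD = h_\bM$ and write $e(T) = |\{C \in \cD : T \subseteq C\}|$ for each $r$-subset $T$. The identity $\Phi_\cD = h_\bM$ forces two structural conditions on $\cD$: (A) $e(T) \geq 1$ for every $r$-subset, since otherwise $T$ would be a true set of $\Phi_\cD$ but not of $h_\bM$; and (B) every $C \in \cD$ admits some $C' \in \cD \setminus \{C\}$ with $|C \cap C'| = r$, since otherwise $C$ itself (of size $r+1 < n$, thanks to $n \geq r+2$) would be a true set of $\Phi_\cD$. Partition the $r$-subsets into $E_1 = \{T : e(T) = 1\}$ and $E_2 = \{T : e(T) \geq 2\}$; for each $C \in \cD$ I would use (B) to pick $T_C \subseteq C$ with $e(T_C) \geq 2$, say $T_C = C \cap C'$. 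The resulting map $C \mapsto T_C$ lands in $E_2$, and each $T \in E_2$ has at most $e(T)$ preimages since $T_C = T$ forces $T \subseteq C$, giving the key inequality $|\cD| \leq \sum_{T \in E_2} e(T)$.

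Combined with the double-counting identity $\sum_T e(T) = (r+1)|\cD|$, this yields $\sum_{T \in E_2} e(T) = (r+1)|\cD| - |E_1|$, so $|E_1| \leq r|\cD|$ and therefore $|E_2| \geq \binom{n}{r} - r|\cD|$. Since $\sum_T (e(T) - 1) = (r+1)|\cD| - \binom{n}{r}$ and each $T \in E_2$ contributes at least $1$ to this sum, we would deduce $(r+1)|\cD| - \binom{n}{r} \geq \binom{n}{r} - r|\cD|$, which rearranges to $(2r+1)|\cD| \geq 2\binom{n}{r}$, as required. I expect the main obstacle to be spotting condition (B) and packaging it into the preimage-bound step: from (A) alone one only obtains the weaker bound $(r+1)|\cD| \geq \binom{n}{r}$, and the improvement from $r+1$ to $r+\tfrac{1}{2}$ in the denominator captures precisely the forced ``overcoverage'' encoded by (B), namely that each circuit in $\cD$ must share an $r$-subset with some other circuit.
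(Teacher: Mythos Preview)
Your proof is correct and follows essentially the same approach as the paper: the upper bound construction is identical (fix an element and take all circuits through it), and the lower bound rests on the same two structural observations (A) and (B). The paper packages the counting step a bit more directly via a token argument---each $r$-subset distributes one token evenly among the circuits of $\cD$ covering it, and (B) guarantees every circuit receives at most $r+\tfrac12$---but your chain of inequalities through $|E_1|$, $|E_2|$, and the map $C\mapsto T_C$ extracts the same bound from the same ingredients.
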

\begin{proof}
We prove the lower bound by a token counting argument. Let us give one token to every $r$-element subset of $V$, and consider an arbitrary subsystem $\cD $ of $\cC$ such that $\Phi_{\cD }=h_\bM$. We redistribute the tokens as follows: every $r$-element set shares its token evenly among the sets in $\cD $ containing it. Then every set in $\cD $ receives at most $r+\frac{1}{2}$ tokens in total. Indeed, for every circuit $C_1\in\cD $ there must be another circuit $C_2\in\cD $ with $|C_1\cap C_2|=r$, as otherwise $F_{\Phi_{\cD}}(C)=C\neq V$ by $r<n-1$, a contradiction. This proves the lower bound.

The upper bound follows from an easy construction: pick an arbitrary element $v\in V$ and consider the system $\cD =\{X+v\mid X\subseteq V-v,|X|=r\}$. It is not difficult to check that $F_{\Phi_{\cD }}(X)=X$ if $|X|\leq r-1$ and $F_{\Phi_{\cD }}(X)=V$ otherwise, hence $\Phi_{\cD }=h_\bM$ as required.
\end{proof}

Interestingly, for uniform matroids, the objective $|\cdot|_C$ is closely related to Tur\'an and covering numbers. A \emph{Tur\'an $(n,t,k)$-system} is an $k$-uniform hypergraph of $n$ vertices such that every $t$-element subset of vertices contains at least one hyperedge. The \emph{Tur\'an number} $t(n,t,k)$ asks for the minimum size of such a hypergraph. Determining the exact value of $t(n,t,k)$ is a problem posed by Tur\'an \cite{turan-61}. The simplest case $t=3$, $k=2$ was answered by Mantel's theorem \cite{mantel1906wiskundige} from 1907, showing that the largest triangle-free graph on $n$ vertices is a complete bipartite graph with color classes having sizes $\lfloor \frac n2 \rfloor$ and $\lceil \frac n2 \rceil$. The complementary family of a Tur\'an $(n,t,k)$-system is called a \emph{covering $(n,n-k,n-t)$-system}. In other words, a covering $(n,q,r)$-system is a $q$-uniform hypergraph such that every $r$ element subset is contained in at least one of the hyperedges. The minimum size of such a system is denoted by $c(n,q,r)$, which by definition is the same as $t(n,n-r,n-q)$.

Let $\ell(n,q,r)= \left\lceil\frac{n}{q}\left\lceil\frac{n-1}{q-1}\left\lceil\dots\left\lceil\frac{n-r+1}{q-r}\right\rceil\dots\right\rceil\right\rceil\right\rceil$. In \cite{schonheim1964coverings}, Sch\"onheim verified that 
$c(n,q,r)\geq \ell(n,q,r)$ for all $n\geq q\geq r\geq 1$. The special case when $q=3$ and $r=2$ was studied by Fort Jr. and Hedlund~\cite{fort1958minimal}, who showed that Sch\"onheim's bound is tight, implying
\begin{equation}
c(n,3,2)=\begin{cases}
n^2/6 & \text{if $n\equiv 0$}\\
(n^2-n)/6 & \text{if $n\equiv 0$}\\
(n^2+2)/6 & \text{if $n\equiv 2$ or $4$}\\
(n^2-n+4)/6 & \text{if $n\equiv 5$}
\end{cases}
\ (\bmod\ 6). \label{eq:fh}
\end{equation}
For further details on covering and Tur\'an systems, we refer the interested reader to \cite{hartman1986covering} and \cite{sidorenko1995we}, respectively.

We consider two further set families with different structural properties. Given a finite set $V$ with $|V|=n$, let $\cH\subseteq 2^V$ be an $(r+1)$-uniform hypergraph. We call $\cH$ a \emph{Steiner $(n,r+1,r)$-system} if every subset $X\subseteq V$ of size $r$ is contained in exactly one hyperedge, and call it a \emph{implication $(n,r+1,r)$-system} if for every subset of $X\subseteq V$ of size at least $r$ there exists a hyperedge $H$ with $|H\setminus X|=1$. We denote by $s(n,r+1,r)$ and $b(n,r+1,r)$, the minimum cardinality of such systems, respectively. The definitions imply that every implication system is a covering system, which in turn is a Steiner system with respect to the same parameters $(n,r+1,r)$. Therefore, we have
\begin{equation*}
s(n,r+1,r)  \leq  c(n,r+1,r)  \leq  b(n,r+1,r). 
\end{equation*}

The interpretation of these systems using Horn-logic is as follows. An $(r+1)$-uniform hypergraph $\cH\subseteq 2^V$ is a covering $(n,r+1,r)$-system if $\FC_{\Phi_\cH}(X)\neq X$ for all $X\subseteq V, |X|=r$, it is a Steiner $(n,r+1,r)$-system if $|\FC_{\Phi_\cH}(X)|=r+1$ for all $X\subseteq V, |X|=r$, and it is an implication $(n,r+1,r)$-system if $\FC_{\Phi_\cH}(X)=V$ for all $X\subseteq V, |X|=r$. In particular, for a rank-$r$ uniform matroid $\bM=(V,\cC)$, a set of circuits $\cD\subseteq\cC$ satisfy $h_\bM=\Phi_{\cD}$ if and only if $\cD$ forms an implication $(n,r+1,r)$-system. Let us remark that a Steiner $(n,r+1,r)$-system is known to exist only for certain combinations of values of $n$ and $r$, see e.g.~\cite{keevash2018counting}.

First we give lower and upper bounds on $|\bM|_C$ in terms of $c(n,r+1,r)$.

\begin{theorem} \label{thm:uniform_c_2}
Let $\bM=(V,\cC)$ be a rank-$r$ uniform matroid on $n\geq r+1$ elements. Then 
\begin{equation*}
c(n,r+1,r)\leq |\bM|_C\leq 2\cdot c(n,r+1,r).
\end{equation*}
\end{theorem}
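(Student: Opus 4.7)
The plan is to establish the two inequalities independently. The lower bound $c(n,r+1,r) \leq |\bM|_C$ is immediate from the observation that every implication $(n,r+1,r)$-system is automatically a covering $(n,r+1,r)$-system: applying the defining condition of an implication system to a subset $X \subseteq V$ of size exactly $r$ yields an $(r{+}1)$-element hyperedge $H$ with $|H \setminus X| = 1$, which forces $X \subseteq H$ and is precisely the covering condition. Combined with the identification $|\bM|_C = b(n,r+1,r)$ noted in the paragraph preceding the theorem, we get $|\bM|_C \geq c(n,r+1,r)$.

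For the upper bound the plan is constructive. I will take a minimum covering system $\cD_0 \subseteq \cC$ of size $c := c(n,r+1,r)$ and augment it by at most $c$ further circuits to obtain an implication system $\cD$, which gives $|\bM|_C \leq |\cD| \leq 2c$. Since $\cD_0$ alone handles the case $|Y|=r$ of the implication condition, the task reduces to arranging, for every $Y \subseteq V$ with $r+1 \leq |Y| \leq n-1$, a circuit $H \in \cD$ with $|H \cap Y| = r$ and $|H \setminus Y| = 1$.

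A natural candidate for the augmentation is a ``pivot'' construction: fix an element $v^* \in V$, and for each $H \in \cD_0$ with $v^* \notin H$ add one shifted circuit of the form $(H - u_H) + v^*$, where $u_H \in H$ is chosen appropriately. This yields $|\cD_1| \leq |\cD_0|$, hence $|\cD| \leq 2c$. The idea is that when $v^* \notin Y$, any $r$-subset $X \subseteq Y$ is covered by some $H \in \cD_0$; if $H \nsubseteq Y$ then $H$ itself extends $Y$, and otherwise the shifted copy of $H$ extends $Y$ by $v^*$. When $v^* \in Y$, one exhibits either a circuit of $\cD_0$ covering some $r$-subset of $Y - v^*$ whose added element lies outside $Y$, or a shifted circuit whose pre-image $H \in \cD_0$ has exactly two elements outside $Y$ with $u_H$ being one of them, yielding $|((H-u_H)+v^*) \setminus Y|=1$.

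The main obstacle is verifying the implication condition in the case $v^* \in Y$, $|Y| \geq r+1$: in the worst case, every covering circuit of every $r$-subset of $Y - v^*$ may lie inside $Y$, and then the augmentation must supply the extension, which requires choosing the $u_H$'s uniformly so as to succeed for all such $Y$ simultaneously. If the direct pivot construction turns out to be insufficient in edge cases, a fallback is to take the union of two minimum covering systems $\cD_0$ and $\cD_0'$ chosen so that every $r$-subset is covered by two distinct circuits: the double coverage then guarantees that for any proper $Y \supsetneq X$ at least one of the two covering circuits of a suitable $r$-subset $X \subseteq Y$ has its added element in $V \setminus Y$. Either approach produces an implication system of size at most $2c$, completing the proof of the upper bound.
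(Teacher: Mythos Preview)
Your lower bound is correct and identical to the paper's. The upper bound, however, has a genuine gap: neither the pivot construction nor the fallback is shown to produce an implication system.

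For the pivot construction you correctly identify the case $v^*\in Y$ as the obstacle and then leave it open. It is a real obstacle: with $r=2$, $\cD_0$ the Fano plane on $\{1,\dots,7\}$, $v^*=1$, and $Y=\{1,2,3\}$, every line $L\in\cD_0$ satisfies $|L\setminus Y|\in\{0,2\}$, so $\cD_0$ alone cannot extend $Y$. Whether some shifted circuit $(H-u_H)+v^*$ does the job depends entirely on the choice of $u_H$, and you give no rule for choosing the $u_H$'s that works for all $Y$ simultaneously, nor any argument that such a choice exists.

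The fallback is not merely unproved but false as stated. Double coverage does \emph{not} imply the implication property. For $n\ge r+5$, take $Y\subseteq V$ with $|Y|=r+2$, put $\cD_Y=\binom{Y}{r+1}$ (which covers every $r$-subset of $Y$ exactly twice), and double-cover every $r$-set $X\nsubseteq Y$ by circuits $X+z$ with $z$ chosen so that $|(X+z)\cap Y|\le r-1$ (possible since $|X\cap Y|\le r-1$, and when $|X\cap Y|=r-1$ there are at least $n-r-3\ge 2$ valid choices of $z\in V\setminus(Y\cup X)$). Then every $r$-set is covered by at least two circuits, yet no hyperedge $H$ of $\cD=\cD_Y\cup\cD'$ has $|H\setminus Y|=1$, so $\cD$ is not an implication system. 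You also never argue that two \emph{minimum} covering systems can be chosen so that their union double-covers; if they share a circuit that is the unique cover of some $r$-set in both, that $r$-set is still only single-covered.

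The paper's construction is different in kind and supplies the missing termination mechanism. It fixes a cyclic order on $V$, lets $\ell(T)$ be the length of a longest cyclic interval in $T$, and defines $\varphi(T)$ by extending one longest interval of $T$ by its next element and deleting an element from another interval (or by a unit cyclic shift if $T$ is already a single interval). With $\cT$ a minimum covering system and $\cD=\cT\cup\varphi(\cT)$, the key point is that for any $r$-set $X$ that is not a single interval one has $\ell\bigl(\tau(X)\cup\varphi(\tau(X))\bigr)>\ell(X)$, so forward chaining strictly increases $\ell$ until an interval of length $r$ appears; intervals then shift cyclically and sweep out all of $V$. The strictly increasing potential $\ell$ is precisely the ingredient your two approaches lack.
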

\begin{proof}
The lower bound follows from the observations that $h_\bM=\Phi_{\cD}$ for some $\cD\subseteq\cC$ if and only if $\cD$ forms an implication $(n,r+1,r)$-system and that $c(n,r+1,r) \leq b(n,r+1,r)$.

For the upper bound, consider a covering $(n,r+1,r)$-system $\cT$ of minimum size. Let $V=\{v_1,\dots,v_n\}$ be a cyclic ordering of the vertices, and for a subset $T\subseteq V$, let $\ell(T)$ denote the length of a longest interval of $T$ with respect to this cyclic ordering. Furthermore, let us define the following mapping $\varphi\colon 2^V \to 2^V$, see also Figure~\ref{fig:shift}. 
\begin{itemize}
    \item If $T$ forms a single interval, then $\varphi(T)$ is obtained by shifting $T$ by one.
    \item If $T$ consists of several intervals, then $\varphi(T)$ is obtained by extending one of its longest intervals by adding the next element, and deleting an element from another interval of $T$.
\end{itemize}
Note that $\ell(\varphi(T))\geq\ell(T)$ holds for $T\subseteq V$, and strict inequality holds if $T$ consists of more than one interval. Furthermore, we have $|\varphi(T)|=|T|$. By the definition of $\cT$, for every subset $X$ of size $r$, there exists a set $\tau(X)\in\cT$ with $X\subseteq \tau(X)$. Observe that $\tau(X)\cup\varphi(\tau(X))\subseteq\FC_{h_\bM}(X)$. Let $\varphi(\cT)=\{\varphi(T)\mid T\in\cT\}$ and define $\cD =\cT\cup\varphi(\cT)$. We claim that $\cD$ represents $h_\bM$, that is, $\Phi_{\cD}=h_\bM$. To see this, we need to show that the closure $\FC_{\Phi_{\cD}}(X)$ of any set $X$ of size $r$ is~$V$. 

First consider the case when $X$ forms a single interval. Observe that $\varphi(\tau(X))\cup\tau(X)$ contains the interval obtained by shifting $X$ by one, hence the closure of any interval of length $r$ is $V$.

Next we show that the closure of an arbitrary set $X$ of size $r$ contains an interval of length $r$ which, together with the previous case, proves the theorem. This follows from the fact that if $X$ is not an interval then $\ell(\tau(X)\cup\varphi(\tau(X)))>\ell(X)$, hence there exists a set $X'\subseteq \tau(X)\cup\varphi(\tau(X))$ of size $r$ with $\ell(X')>\ell(X)$.
\end{proof}

\begin{figure}[t!]
\centering
\begin{subfigure}[t]{0.46\textwidth}
\centering
\includegraphics[width=\linewidth]{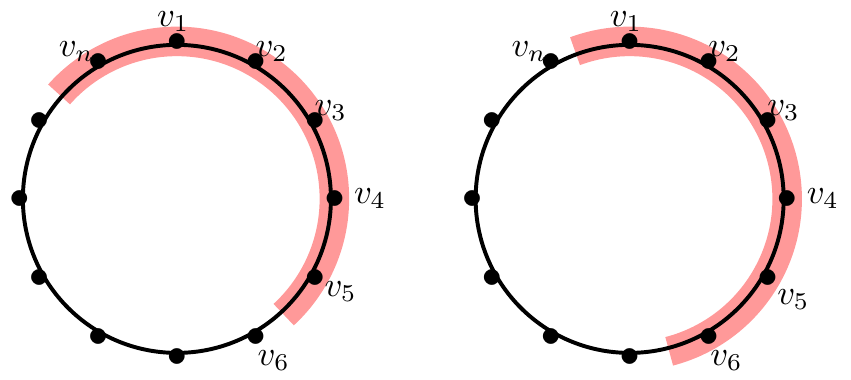}
\caption{The image $\varphi(T)$ is obtained by shifting $T$ by one.}
\label{fig:1}
\end{subfigure}\hfill
\begin{subfigure}[t]{0.52\textwidth}
\centering
\includegraphics[width=0.88\linewidth]{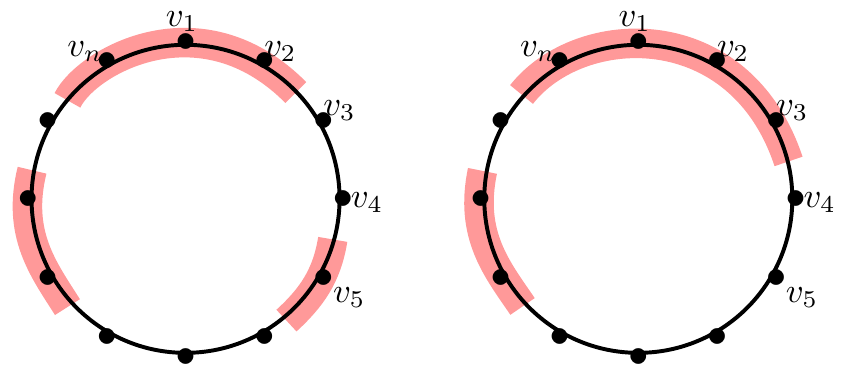}
\caption{The image $\varphi(T)$ is obtained by extending a longest interval and deleting an element from another.}
\label{fig:2}
\end{subfigure}
\caption{Illustration of the mapping $\varphi$ when $T$ forms a single interval and when it consists of several intervals.}
\label{fig:shift}
\end{figure}

Let $\bM$ be a rank-$2$ uniform matroid on $n$ elements. The combination of Theorem~\ref{thm:uniform_c_2} and \eqref{eq:fh} gives $|\bM|_C=\kappa\cdot n^2+O(n)$ for some $1/6\leq\kappa\leq 1/3$, where the lower bound on $\kappa$ can be improved to $1/5$ by Theorem~\ref{thm:uniform_c_1}. Next we prove that the right order of magnitude of $|\bM|_C$ coincides with this lower bound.

\begin{theorem} \label{thm:ranktwo}
Let $\bM=(V,\cC)$ be a rank-$2$ uniform matroid on $n\geq 46$ elements. Then
\begin{equation*}
|\bM|_C=\frac{n^2}{5}+O(n).
\end{equation*}
\end{theorem}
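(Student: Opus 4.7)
The lower bound $|\bM|_C \ge n(n-1)/5 = n^2/5 + O(n)$ is immediate by applying Theorem~\ref{thm:uniform_c_1} with $r=2$, so the task reduces to constructing an implication $(n,3,2)$-system $\cD \subseteq \cC$ of size $n^2/5 + O(n)$.

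The shape of the construction is dictated by the tightness of the token-counting argument in the proof of Theorem~\ref{thm:uniform_c_1}: a triple attains the maximum $5/2$ tokens precisely when exactly one of its three pairs is shared with exactly one other triple, and the other two pairs are unshared. Two triples $\{a,b,c\}$ and $\{a,b,d\}$ meeting in the single common pair $\{a,b\}$ form a \emph{book} centered at $\{a,b\}$ and cover the five pairs $\{a,b\}, \{a,c\}, \{b,c\}, \{a,d\}, \{b,d\}$ --- equivalently, a copy of $K_4 - \{c,d\}$ on $\{a,b,c,d\}$. An exact decomposition of $E(K_n)$ into books would yield $2\binom{n}{2}/5 = n(n-1)/5$ triples. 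My plan is therefore to invoke a near-decomposition of $E(K_n)$ into copies of $K_4 - e$ leaving only $O(n)$ residual edges, and to cover each residual edge by a single extra triple, obtaining $|\cD| = n(n-1)/5 + O(n)$.

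To verify $\Phi_\cD = h_\bM$, I will show that $\FC_{\Phi_\cD}(\{a,b\}) = V$ for every pair $\{a,b\}$. If $\{a,b\}$ is the center of a book $\{\{a,b,c\},\{a,b,d\}\} \subseteq \cD$, then $\FC_{\Phi_\cD}(\{a,b\}) \supseteq \{a,b,c,d\}$ in one forward-chaining step; if $\{a,b\}$ is a spoke of a book or is covered only by a residual triple, then within two steps the closure reaches a four-element set. Thereafter, every pair inside the current closure is itself covered by some triple of $\cD$ and contributes at least one new vertex, so a straightforward expansion argument (the closure size essentially doubles each round until it hits $V$) shows that $\FC_{\Phi_\cD}(\{a,b\}) = V$ within $O(\log n)$ rounds. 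Because this establishes $\FC_{\Phi_\cD}(X) = V$ for every $|X| \ge 2$, and $\FC_{\Phi_\cD}(X) = X$ otherwise by the rank-$2$ uniform structure, we conclude $\Phi_\cD = h_\bM$.

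The main obstacle is the design-theoretic input: for every $n \ge 46$, producing a near-decomposition of $K_n$ into copies of $K_4 - e$ leaving only $O(n)$ uncovered edges. This should follow from classical results on $K_4 - e$ designs and packings --- exact decompositions are known under the natural divisibility conditions ($n(n-1) \equiv 0 \pmod{10}$), and the residual edges in the remaining residue classes can be absorbed by a constant number of extra triples per residue class. The threshold $n \ge 46$ is likely the smallest size at which such near-decompositions become uniformly available across all residue classes modulo $10$. Once this combinatorial input is secured, the implication-spreading verification is routine.
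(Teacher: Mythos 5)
Your lower bound is fine and matches the paper's. The gap is in the verification that your book decomposition actually represents $h_\bM$. The step ``every pair inside the current closure is itself covered by some triple of $\cD$ and contributes at least one new vertex'' is false: the third vertex of the covering triple may already lie in the closure. Concretely, for the forward chaining under $\Phi_\cD$ to escape a set $S$ with $|S|\geq 2$, you need some triple of $\cD$ with exactly two vertices in $S$ and one outside; an arbitrary (near-)decomposition of $E(K_n)$ into copies of $K_4-e$ gives no such guarantee. If the decomposition happens to contain a sub-decomposition of $K_S$ for some proper $S\subseteq V$ with $|S|\equiv 0,1\pmod 5$ (as recursive constructions of $K_4-e$ designs typically do), and all cross edges are covered by triples having at most one vertex in $S$, then $\FC_{\Phi_\cD}(\{a,b\})=S\neq V$ for pairs inside $S$, and $\Phi_\cD\neq h_\bM$. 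So your ``doubling'' expansion argument does not hold for a generic decomposition; you would need to choose (or post-process) the decomposition to enforce a global connectivity/expansion property, and that is precisely the nontrivial part of the theorem. The design-theoretic input you defer to (``near-decompositions of $K_n$ into $K_4-e$ for all $n\geq 46$'') is also left uncited, though that part is standard.

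For comparison, the paper avoids this issue by building the expansion property into the construction: it partitions $V$ into $p\approx n/5$ groups of size $5$ plus a constant-size residual set $B$, uses $O(n)$ triples through $B$ to guarantee that any pair inside a single group (or meeting $B$) closes to all of $V$, and then places $30$ carefully chosen triples on each block of a Steiner triple system over the groups so that any cross-group pair reaches two elements of a common group in one step. The count is $30\cdot\tbinom{p}{2}/3=n^2/5+O(n)$, the same leading term as your book count, but every closure is explicitly verified to reach $V$ in a bounded number of rounds. Your approach could plausibly be repaired by exhibiting a specific $K_4-e$ packing with the required connectivity, but as written the key correctness claim is unsupported.
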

\begin{proof}
Assume that $p$ and $b$ are positive integers such that $n=a\cdot p+b$, $p\geq 3$ and $b\geq 2$. We also assume that $b$ is a constant; its value will be specified later. Given such numbers, we take an arbitrary partition into $p+1$ sets $V=A_1\cup\dots\cup A_p\cup B$ where $|A_i|=5$ for $i\in[p]$ and $|B|=b$. We call the $A_i$s \emph{groups} and the set $B$ as \emph{residual}. Furthermore, we denote the elements of the $i$th group by $A_i=\{a^i_0,\dots,a^i_4\}$ for $i\in[p]$. Throughout the proof, we work with the indices of the elements modulo $5$.

The high-level idea of the construction is as follows. As a first step, we show that there exists a hypergraph $\cC_1$ of size $O(n)$ for which $\FC_{\Phi_{\cC_1}}(\{u,v\})=V$ whenever $\{u,v\}\subseteq A_i\cup B$ for some $i\in[p]$, that is, if $u$ and $v$ are not in distinct groups. Then it remains to consider pairs that have one-one element in two distinct groups. Hence in a second step, we consider a Steiner $(p,3,2)$-system on the set of groups, and define a hypergraph $\cC_2$ of size $n^2/5+O(n)$ which ensures that $|\FC_{\Phi_{\cC_2}}(\{u,v\})\cap A_i|\geq 2$ for some group $i$,  whenever $u$ and $v$ are in distinct groups. For this step, we have to choose the values of $p$ and $b$ carefully in order to ensure the existence of a Steiner $(p,3,2)$-system and keep $b$ to be a constant. At the end, the hypergraph $\cD=\cC_1\cup\cC_2$ has total size $n^2/5+O(n)$ and form an implication $(n,3,2)$-system, proving the claim.

For the first step, define
\begin{align*}
\cC_1
=&
\left\{\{u,v,w\}\mid u,v\in A_i\ \text{for some $i\in[p]$},\ w\in B\right\}\\
\cup&
\left\{\{u,v,w\}\mid u,v\in B,\ w\notin B\right\}.
\end{align*}
By the assumption that $b$ is a constant, we have $|\cC_1|=p\cdot {5 \choose 2}\cdot  b+(n-b)\cdot {b \choose 2}=O(n)$. 

\begin{claim}\label{cl:groups}
Assume that $\{u,v\}\subseteq A_i\cup B$ for some $i\in[p]$. Then $\FC_{\Phi_{\cC_1}}(\{u,v\})=V$.
\end{claim}
\begin{proof}
If $\{u,v\}\subseteq B$, then $w\in\FC_{\Phi_{\cC_1}}(\{u,v\})$ for every $w\in V\setminus B$ since $\{u,v,w\}\in\cC_1$. As $|V\setminus B|=5\cdot p\geq 2$, there exist distinct $w_1,w_2\in V\setminus B$, implying that $z\in\FC_{\Phi_{\cC_1}}(\{u,v\})$ for every $z\in B$ since $\{w_1,w_2,z\}\in\cC_1$.  

If $\{u,v\}\in A_i$ for some $i\in[p]$, then $w\in\FC_{\Phi_{\cC_1}}(\{u,v\})$ for $w\in B$ since  $\{u,v,w\}\in\cC_1$. As $|B|=b\geq 2$, there exist distinct $w_1,w_2\in B$, implying that $z\in\FC_{\Phi_{\cC_1}}(\{u,v\})$ for every $z\in V\setminus B$ since $\{w_1,w_2,z\}\in\cC_1$. 
\end{proof}

For the second step, we need to choose $p$ in such a way that there exists a Steiner $(p,3,2)$-system, usually called a \emph{Steiner triple system}. It is known that such a system exists whenever $p\equiv 1$ or $3\ (\bmod\ 6)$, see more about Steiner triple systems in~\cite{keevash2018counting}. To achieve this, let us fix the value of $b$ such that
\begin{equation*}
b=\begin{cases}
    (n \bmod 30)-15 & \text{if $(n \bmod\ 30)\geq 17$},\\
    (n \bmod 30)+15 & \text{if $(n \bmod\ 30)\leq 16$},
\end{cases}
\end{equation*}
and set $p=(n-b)/5$. By the assumption $n\geq 46$ and the definition of $b$, we get $p\equiv 3\ (\bmod\ 6)$, $p\geq 3$ and $2\leq b\leq 31$. Let $\cT\subseteq 2^{[p]}$ be a Steiner triple system that is defined on the groups. For each triple $T=\{x,y,z\}\in \cT$, set
\begin{align*}
\cC_T
=&
\left\{\{a^x_i,a^y_i,a^z_{i+1}\},\{a^x_i,a^y_i,a^z_{i+2}\}\mid 0\leq i\leq 4\right\}\\
\cup 
&\left\{\{a^y_i,a^z_i,a^x_{i+1}\},\{a^y_i,a^z_i,a^x_{i+2}\}\mid 0\leq i\leq 4\right\}\\
\cup 
&\left\{\{a^x_i,a^z_i,a^y_{i+1}\},\{a^x_i,a^z_i,a^y_{i+2}\}\mid 0\leq i\leq 4\right\},
\end{align*}
and define $\cC_2=\bigcup_{T\in\cT}\cC_T$, see Figure~\ref{fig:cons} for an example. Since $p=n/5+O(1)$, we have $|\cC_2|=30\cdot|\cT|=30\cdot 1/3\cdot {p\choose 2}=\frac{n^2}{5}+O(n)$.

\begin{figure}
    \centering
    \includegraphics[width=0.4\textwidth]{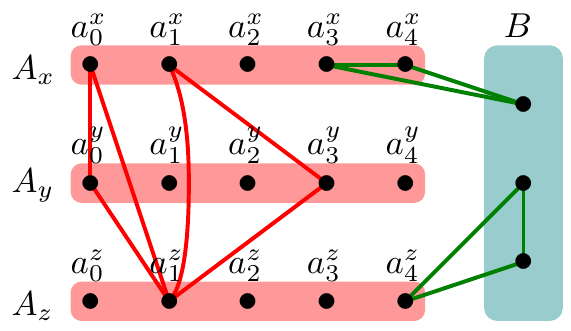}
    \caption{Construction of the hyperedges appearing in $\cC_1$ (green triangles) and in $\cC_T$ (red triangles), where $T=\{x,y,z\}$ is a triple of the Steiner $(p,3,2)$-system.}
    \label{fig:cons}
\end{figure}

\begin{claim}\label{cl:steiner}
Assume that $u\in A_x$ and $v\in A_y$ where $T=\{x,y,z\}$ is a triple in $\cT$. Then $\FC_{\Phi_{\cC_1\cup\cC_2}}(\{u,v\})=V$. 
\end{claim}
\begin{proof}
By Claim~\ref{cl:groups}, it suffices to show that $\FC_{\Phi_{\cC_2}}(\{u,v\})$ contains two elements of the same group. If $u=a^x_i$ and $v=a^y_i$ for some $0\leq i\leq 4$, then $\cC_2$ contains both $\{a^x_i,a^y_i,a^z_{i+1}\}$ and $\{a^x_i,a^y_i,a^z_{i+2}\}$, and we are done. Otherwise, we may assume without loss of generality that $u=a^x_i$ and $v=a^y_j$ where $(j-i \bmod\ 5)\leq 2$. Then $\cC_2$ contains both $\{a^x_i,a^z_i,a^y_j\}$ and $\{a^x_i,a^z_i,a^y_{3+2i-j}\}$. This finishes the proof of the claim.
\end{proof}

By Claims \ref{cl:groups} and \ref{cl:steiner}, the hypergraph $\cD=\cC_1\cup\cC_2$ satisfies $h_\bM=\Phi_{\cD}$ and has total size $n^2/5+O(n)$, concluding the proof of the theorem.
\end{proof}

Finally, we consider the objective $|\cdot|_K$ for the uniform case. 

\begin{theorem} \label{thm:uniform_k}
Let $\bM=(V,\cC)$ be a rank-$r$ uniform matroid on $n\geq r+1$ elements. Then $|\bM|_K= {n\choose r}$.
\end{theorem}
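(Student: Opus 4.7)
The plan is to prove the matching lower and upper bounds $|\bM|_K \geq {n \choose r}$ and $|\bM|_K \leq {n \choose r}$.

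For the lower bound, I would fix any collection $\Psi$ of circuit clauses representing $h_\bM$ and any $r$-subset $X \subseteq V$. Since $F_{h_\bM}(X) = V \neq X$, the first forward chaining step under $\Psi$ must add some element to $X$, so $\Psi$ contains a clause $B \to u$ with $B \subseteq X$ and $u \notin X$. Because every circuit clause has body of size $r$, this forces $B = X$. Distinct $r$-subsets therefore force distinct clauses in $\Psi$, yielding $|\Psi| \geq {n \choose r}$.

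For the upper bound, I would exhibit an explicit collection of ${n \choose r}$ circuit clauses that represents $h_\bM$. Order $V = \{1, 2, \ldots, n\}$ linearly and, for each $r$-subset $X = \{i_1 < \cdots < i_r\}$, include the single clause $X \to v(X)$, where $v(X) := i_r + 1$ if $i_r < n$ and $v(X) := \min(V \setminus X)$ otherwise. This assignment gives $v(X) \notin X$ in both cases and uses exactly one circuit clause per $r$-subset, for a total of ${n \choose r}$ clauses.

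To verify that the resulting CNF represents $h_\bM$, it suffices to show that no proper subset $T \subsetneq V$ with $|T| \geq r$ is a true set; equivalently, some $Y \subseteq T$ with $|Y| = r$ must satisfy $v(Y) \notin T$. I would split into cases based on whether $n \in T$. If $n \notin T$, the $r$ largest elements of $T$ form a set $Y$ with $\max Y = \max(T) < n$, and so $v(Y) = \max(T) + 1 \notin T$. If $n \in T$, let $z := \min(V \setminus T)$; when $z > r$ the consecutive block $Y := \{z-r, \ldots, z-1\}$ lies in $T$ and yields $v(Y) = z \notin T$, while when $z \leq r$ I would form $Y := \{1, \ldots, z-1, n\} \cup S$ for any $S \subseteq T \setminus \{1, \ldots, z-1, n\}$ with $|S| = r-z$ (such $S$ exists because $|T| \geq r$), obtaining $\max Y = n$ and $\min(V \setminus Y) = z \notin T$. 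The main obstacle is juggling the endpoint of the ordering, but once the special role of $n$ is isolated the case analysis is routine.
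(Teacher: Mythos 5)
Your proof is correct and follows essentially the same route as the paper: the lower bound counts the ${n \choose r}$ distinct bodies that forward chaining forces any circuit-clause representation to contain, and the upper bound assigns exactly one circuit clause per $r$-subset with a head determined by an ordering of $V$. The only difference is cosmetic---the paper uses a cyclic order (taking as head the element following the first maximal interval of $X$), whereas you use a linear order with a wraparound special case at $n$; your case analysis verifying that no proper set of size at least $r$ is closed is complete and, in fact, more detailed than the paper's.
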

\begin{proof}
Recall that $F_{h_\bM}(X)=V$ for any subset $X\subseteq V$ of size $r$. This implies that each $r$-element subset has to be the body of at least one hyperedge in any representation of $h_\bM$ using circuit clauses, implying $|\bM|_K\geq {n\choose r}$.

For the upper bound, let $v_1,\dots,v_n$ denote the elements of $V$. Take an arbitrary set $X\subseteq V$ of size $r$. Each such set has the form $X=\bigcup_{j=1}^q\{v_{i_j},\dots,v_{i_j+\delta_j}\}$ for some $q\geq 1$, where $i_j+\delta_j+1<i_{j+1}$ for $j\in[q-1]$ and indices are meant in a cyclic order. Define $v_X=v_{i_1+\delta_1+1}$ and
\begin{equation*}
\Phi=\bigwedge_{\substack{X\subseteq V:\\ |X|=r}}X\rightarrow v_X.
\end{equation*}
Then $F_\Phi(X)=X$ if $|X|\leq r-1$ and $F_\Phi(X)=V$ otherwise, therefore $\Phi$ is a representation of $h_\bM$ that uses circular clauses only, concluding the proof.
\end{proof}

\section{Conclusions}
\label{sec:conclusions}

In the present paper, as a continuation of the work started in \cite{hypergraph_horn}, we considered hypergraph Horn functions associated to families of circuits of a matroid, and introduced the notion of matroid Horn functions. We gave several equivalent characterizations of matroid Horn functions in terms of their canonical and complete CNF representations, and studied minimum representations of matroids and matroid Horn functions with respect to various objectives. 

The proposed subclass opens up new research directions, hence we conclude our paper with listing a few open problems.

\begin{qu}
What is the computational complexity of checking if a given a definite Horn function $h$ represented by a definite Horn CNF $\Psi$ is matroid Horn or not?
\end{qu}

\begin{qu}
Given a rank-$r$ uniform matroid $\bM$, what is the right order of magnitude of $|\bM|_C=b(n,r+1,r)$?
\end{qu}

We conjecture that the answer to this question is in fact the lower bound provided by Theorem~\ref{thm:uniform_c_1}, that is, $\frac{2\cdot n^r}{(2\cdot r+1)\cdot r!}+O(n^{r-1})$.  

\medskip
\paragraph{Acknowledgements} This  work  was  supported  by  the  Research  Institute  for  Mathematical  Sciences,  an  International Joint Usage/Research Center located in Kyoto University. 
The first author was supported by the Lend\"ulet Program of the Hungarian Academy of Sciences -- grant number LP2021-1/2021 and by the Hungarian National Research, Development and Innovation Office -- NKFIH, grant number FK128673.
The third author was supported by JSPS KAKENHI Grant Numbers JP20H05967, JP19K22841, and JP20H00609.

\bibliographystyle{abbrv}
\bibliography{matroidal_horn}

\end{document}